\documentclass[11pt,reqno]{amsart}
\usepackage[
    letterpaper,
    left=1in,
    right=1in,
    top=1in,
    bottom=1in,
    headheight=14pt,
    headsep=18pt,
    footskip=28pt
]{geometry}
\usepackage[T1]{fontenc}
\usepackage{lmodern}
\usepackage{microtype}
\usepackage{amsmath,amssymb,amsthm,mathtools}
\usepackage{mathrsfs}
\usepackage{bm}
\usepackage{booktabs}
\usepackage{tabularx}
\usepackage{array}
\usepackage{enumitem}
\usepackage[dvipsnames]{xcolor}
\usepackage[
    colorlinks=true,
    linkcolor=MidnightBlue,
    citecolor=OliveGreen,
    urlcolor=MidnightBlue
]{hyperref}
\newtheorem{theorem}{Theorem}[section]
\newtheorem{proposition}[theorem]{Proposition}
\newtheorem{lemma}[theorem]{Lemma}
\newtheorem{corollary}[theorem]{Corollary}
\theoremstyle{definition}

\theoremstyle{remark}
\newtheorem{remark}[theorem]{Remark}
\numberwithin{equation}{section}
\allowdisplaybreaks[2]

\setlist[enumerate]{
    leftmargin=2.2em,
    itemsep=0.25em,
    topsep=0.4em
}

\setlist[itemize]{
    leftmargin=2.0em,
    itemsep=0.2em,
    topsep=0.35em
}

\newcommand{\R}{\mathbb{R}}
\newcommand{\supp}{\operatorname{supp}}

\newcommand{\Div}{\operatorname{Div}}
\newcommand{\diag}{\operatorname{diag}}
\newcommand{\cM}{\mathcal{M}}
\newcommand{\1}{\mathbf{1}}

\title[GLOBAL WELL-POSEDNESS FOR THE VLASOV--RIESZ EQUATION]
{Global well-posedness for Vlasov equations with power-law interactions in $d\ge4$}

\author{Emmanouil Katriadakis}
\date{}
\begin{document}

\begin{abstract}
We study the Vlasov equation with general power-law (Riesz-type) potentials with exponent \(\alpha\). We prove global well-posedness in every dimension \(d\ge4\), for both attractive and repulsive interactions, throughout the range \(0<\alpha<3\), for arbitrary nonnegative compactly supported bounded initial data. No smallness condition is imposed on the initial data. For the attractive problem, the virial identity gives finite-time breakdown for negative-energy solutions when \(\alpha\ge3\). The proof of global well-posedness combines the analysis of characteristics, estimates in Lagrangian coordinates, and a crucial application of a \emph{compensated integrability} inequality due to Denis Serre.
\end{abstract}

\maketitle
\section{Introduction}
\label{sec:introduction}

We consider the Vlasov equation
\begin{equation}
\partial_t f+v\cdot\nabla_x f
   +E_\sigma[f]\cdot\nabla_v f=0,
\qquad
\rho_f(t,x)
=
\int_{\mathbb R^d}f(t,x,v)\,dv,
\label{eq:intro-vlasov}
\end{equation}
with
\begin{equation}
E_\sigma[f]
=
\sigma K_\alpha*\rho_f,
\qquad
K_\alpha(x)
=
-c_{d,\alpha}\frac{x}{|x|^{\alpha+1}},
\qquad
c_{d,\alpha}>0.
\label{eq:intro-field}
\end{equation}
Here
\[
\sigma=+1
\quad\text{denotes attraction},
\qquad
\sigma=-1
\quad\text{denotes repulsion},
\]
and
\[
|K_\alpha(x)|
\simeq
|x|^{-\alpha}.
\]

The starting point for this work is the classical global theory of the three-dimensional Vlasov--Poisson equation.  The papers of Pfaffelmoser~\cite{Pfaffelmoser}, Schaeffer~\cite{Schaeffer}, and Lions--Perthame~\cite{LionsPerthame} showed that smooth solutions arising from general smooth initial data exist for all time.  These results are among the foundational global well-posedness theorems in kinetic theory.

A natural question is how the global theory changes when one moves beyond the three-dimensional Coulomb case, both by varying the singularity of the interaction and by considering higher dimensions.  Power-law potentials provide a natural setting in which to ask this question.

In recent years, several authors have made progress in this direction, primarily for small initial data.  For the Vlasov--Poisson equation, Hwang, Rendall, and Vel\'azquez~\cite{HwangRendallVelazquez} proved global small-data solutions with sharp decay estimates in dimensions \(d\ge3\).

More recently, Huang and Kwon~\cite{HuangKwon} proved small-data global well-posedness and scattering in three dimensions throughout the range $2<\alpha<3$ in the notation of the present paper.  Hong and Pankavich~\cite{HongPankavichShortRange,HongPankavichLongRange} have also obtained small-data global and scattering results for broader classes of non-Coulomb interaction potentials.  These results extend the global theory beyond the classical three-dimensional Coulomb problem, but largely under smallness assumptions. The corresponding large-data theory for general power-law interactions remains largely open. The present paper addresses this gap and establishes a large-data global theory in a substantial part of this regime.
\clearpage
\begin{theorem}[Global well-posedness with bounded data]
\label{thm:gwp}
Let \(d\ge4\) and \(0<\alpha<3\).  Assume
\[
f_0\in L^\infty(\mathbb R^{2d}),
\qquad
f_0\ge0,
\qquad
\operatorname{ess\,supp}f_0\Subset\mathbb R^{2d}.
\]
Then, for either sign \(\sigma\in\{-1,+1\}\), equation
\eqref{eq:intro-vlasov} admits a unique global bounded characteristic
solution.
\end{theorem}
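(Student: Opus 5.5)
The plan is to follow the Pfaffelmoser--Schaeffer scheme: a local theory plus a continuation criterion, with the velocity-support bound closed by Lagrangian estimates in which Serre's compensated integrability replaces the missing $L^\infty$ control of $\rho$.

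\emph{Local theory and continuation.} First I set up a local theory for bounded characteristic solutions. For $f_0\in L^\infty$ with compact support and a field $E$ that is bounded and log-Lipschitz, the characteristics
\[
\dot X=V,\qquad \dot V=E_\sigma(t,X)
\]
are uniquely solvable. The field $K_\alpha*\rho$ is bounded and log-Lipschitz (indeed $C^{1-\alpha+\dots}$-type for $\alpha<1$) whenever $\rho\in L^1\cap L^\infty$ and $\alpha<d$. A contraction/Gronwall argument on the flows then gives existence and uniqueness on a short interval. The interval length depends only on $\|f_0\|_\infty$ and the size of the support. The resulting continuation criterion is that the solution extends as long as the velocity support radius
\[
P(t)=\sup\{|v|:(x,v)\in\supp f(t)\}
\]
stays finite, since $\|\rho\|_\infty\lesssim \|f_0\|_\infty P^d$.

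\emph{A priori bounds.} The conserved quantities are the $L^p$ norms of $f$ and the energy
\[
\tfrac12\iint|v|^2f-\tfrac\sigma2\iint \Phi_\alpha(x-y)\rho\rho,\qquad \Phi_\alpha\sim |x|^{1-\alpha}.
\]
In the repulsive case this controls kinetic energy directly. In the attractive case I bound the potential term by Hardy--Littlewood--Sobolev with $\rho\in L^{1}\cap L^{(d+2)/d}$, using the interpolation
\[
\|\rho\|_{(d+2)/d}\lesssim \|f\|_\infty^{2/(d+2)}\Big(\iint|v|^2f\Big)^{d/(d+2)}.
\]
I must check that the resulting power of the kinetic energy is below $1$ for $\alpha<3$. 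If it fails in high $d$, I would instead get kinetic-energy control from a time-dependent virial-type argument, or go directly through the Lagrangian bound below.

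\emph{Velocity support: Lagrangian plus Serre.} I write $\dot V=E(t,X(t))$ and estimate $\int_0^t|E(s,X(s))|\,ds$ along a single characteristic. Split the convolution into near and far regions. The far part is controlled by conserved norms. The near part needs space-time integrability of $\rho$ beyond what the energy gives.

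Here I use Serre's compensated integrability. The tensor
\[
T(t,x)=\int (1,v)\otimes(1,v)\,f\,dv
\]
is positive semidefinite on $\R^{1+d}$, and
\[
\Div_{t,x}T=(0,\rho E).
\]
Serre's inequality then gives
\[
\int_0^T\!\!\int (\det T)^{1/d}\,dx\,dt
\lesssim \Big(\|T(0)\|_{L^1}+\|T(T)\|_{L^1}+\|\rho E\|_{L^1_{t,x}}\Big)^{1+1/d}.
\]
Pointwise, $\det T\gtrsim$ a power of $\rho$ times velocity variance, which with $f$ bounded yields a space-time bound on $\rho^{1+1/d}$-type quantities. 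The hypothesis $d\ge4$ should enter precisely in making this exponent good enough.

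\emph{Closing the argument.} I plug the Serre bound into a Pfaffelmoser-type good/bad set decomposition of $\int_{t-\Delta}^t|E(X(s))|\,ds$ in Lagrangian coordinates. This should give
\[
P(t)\le C+C\int_0^t P(s)^{\theta}\,ds\qquad\text{with }\theta<1,
\]
or a Gronwall-type inequality with $\theta=1$ up to logarithms. In either case $P$ stays finite, and the continuation criterion gives the global solution.

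\emph{Main obstacle.} The hardest step is the last one. The term $\|\rho E\|_{L^1_{t,x}}$ on the right of Serre's inequality must be bounded in terms of $P$ without circularity, and the singular kernel $|x|^{-\alpha}$ with $\alpha$ near $3$ must be handled in the near zone. My first attempt would be to bound
\[
\|\rho E\|_{L^1}\le \|\rho\|_{L^p}\,\|E\|_{L^{p'}}
\]
using HLS with the energy bounds, and then run a bootstrap in $P$.
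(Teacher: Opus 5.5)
There is a genuine gap, and it is at the step you call \emph{Closing the argument}. You assert that plugging the Serre bound into a Pfaffelmoser good/bad decomposition ``should give'' $P(t)\le C+C\int_0^tP(s)^\theta\,ds$, but that is the whole theorem, and no mechanism is supplied. In the paper, the space--time bound on $\rho$ does not enter the characteristic estimate directly. What enters is Lagrangian tail information on the accumulated field $a(z)=\int_0^T|E_\sigma(t,X(t;z))|\,dt$. The divergence term satisfies $\iint\rho|E_\sigma|=\int f_0a$, so a bound on it controls the mass of particles that ever reach speed $\lambda$.

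The paper then does three things:
\begin{itemize}
\item It proves a crossing lemma. On time intervals where the fields along two characteristics accumulate less than $\tau/8$, the relative position moves monotonically at speed $\gtrsim\tau$. Hence $\int_{\{|X_z-X_*|>r\}}|X_z-X_*|^{-\alpha}\,dt\lesssim\tau^{-1}r^{1-\alpha}$, using $\alpha>1$.
\item It bins velocities by $|w|\sim\lambda$ and $|w-V_*|\sim\tau$, and balances the near bound $m^dr^{d-\alpha}$ against this far bound.
\item It obtains closure only when $(d-\alpha)(d+\beta+2)>(d-1)^2$, where $J(\lambda)\lesssim\lambda^{-\beta}$.
\end{itemize}
A single global budget gives only $\beta=0$. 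That covers $\alpha<(4d-1)/(d+2)$, which is all of $1<\alpha<3$ only when $d\ge7$.

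For $d=4,5,6$ and $\alpha$ near $3$, you need a second idea your plan lacks. Apply compensated integrability to the transported sub-population $\{a\ge s\}$, with a time rescaling that yields the factor $M_G^{1/d}$ instead of $C_G^{1/d}$. Split the force into low- and high-accumulation parts, and bootstrap the tail to $\beta>3$. This is where $d\ge4$ really enters: through the closing arithmetic $(d-3)(d+5)-(d-1)^2=4(d-4)$, and through $\alpha<d-1$ for a Lipschitz field in the local theory. It does not enter through the Serre exponent.

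The input to the closing step is also not secured. Because $f\le F_0$, the velocity covariance satisfies $\det\Sigma\gtrsim F_0^{-2}\rho^2$. Hence $\det T\ge c_dF_0^{-2}\rho^{d+3}$, and Serre controls $X_T=\iint\rho^{1+3/d}$, not a ``$\rho^{1+1/d}$-type'' quantity.

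The exponent is decisive. HLS and interpolation between $L^1$ and $L^{1+3/d}$ give $\iint\rho|E_\sigma|\lesssim T^{1-\alpha/3}X_T^{\alpha/3}$, with a time exponent independent of $d$. This absorbs into $X_T\le C_0+C_1\iint\rho|E_\sigma|$ exactly because $\alpha<3$. With $1+1/d$ the same computation would require $\alpha<1$. This absorption uses neither the energy nor $P$, and it is how the paper removes the circularity you flag.

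Your proposed substitute is HLS with $\rho\in L^\infty_tL^{(d+2)/d}_x$ from the energy. That bounds $\iint\rho|E_\sigma|$ only when $\alpha\le4d/(d+2)$, so it misses $(8/3,3)$ for $d=4$ and $(20/7,3)$ for $d=5$. A bootstrap in $P$ would reintroduce exactly the circularity you want to avoid.
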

No smallness condition is imposed.  The initial distribution is not
required to belong to a Sobolev or H\"older space, and the global
argument never differentiates \(f\).  The same estimates apply to the
attractive and repulsive equations; no favorable sign of the
interaction energy is used in the proof of
Theorem~\ref{thm:gwp}.

The upper endpoint in Theorem~\ref{thm:gwp} is independent of the
dimension.  In dimension four, \(\alpha=3\) is the Coulomb exponent;
for \(d>4\), it lies strictly below the Coulomb exponent \(d-1\).
\begin{remark}
The a priori argument is not specific to bounded characteristic solutions.
The same estimates apply to \(C_c^1(\mathbb R^{2d})\) solutions and to
compactly supported \(W^{1,\infty}\) solutions. Thus the same global
continuation mechanism applies in these regularity classes.
\end{remark}
\begin{theorem}[Virial obstruction]
\label{thm:virial-obstruction}
Assume the attractive sign \(\sigma=+1\) and \(\alpha\ge3\).
Let \(f\) be a sufficiently regular solution for which the conservation
of energy and the virial identity are justified, with finite second
spatial moment.  If the conserved energy is negative, then such a solution
cannot exist globally in time.
\end{theorem}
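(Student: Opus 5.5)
We argue by the classical Glassey-type virial argument. Let $I(t)=\int\!\!\int |x|^2 f\,dx\,dv$ be the second spatial moment, and let
\[
\mathcal E=\frac12\int\!\!\int |v|^2 f\,dx\,dv+\sigma\,\mathcal P(t)
\]
be the conserved energy. Here $\mathcal P$ is the interaction energy, with kernel proportional to $-|x-y|^{1-\alpha}$ (for $\alpha\neq1$; logarithmic when $\alpha=1$), normalized so that $-\nabla$ of the potential reproduces $K_\alpha$.

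For the attractive sign we will have
\[
\mathcal P=-\frac{c_{d,\alpha}}{2(\alpha-1)}\iint \frac{\rho(x)\rho(y)}{|x-y|^{\alpha-1}}\,dx\,dy<0
\]
when $\alpha>1$, which covers $\alpha\ge3$. Differentiating $I$ twice along the equation, and using mass conservation and the assumed justification of the identities, gives
\[
\dot I=2\iint x\cdot v\, f,\qquad
\ddot I=2\iint |v|^2 f+2\int x\cdot E\,\rho\,dx .
\]

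Next we symmetrize the field term using the oddness of $K_\alpha$:
\[
\int x\cdot E\rho=-\frac{c_{d,\alpha}}{2}\iint \frac{\rho(x)\rho(y)}{|x-y|^{\alpha-1}}=(\alpha-1)\mathcal P .
\]
Writing $T=\frac12\iint|v|^2f$, this yields $\ddot I=4T+2(\alpha-1)\mathcal P$. We eliminate $T$ by energy conservation, $T=\mathcal E-\mathcal P$:
\[
\ddot I=4\mathcal E+2(\alpha-3)\mathcal P .
\]
Since $\alpha\ge3$ and $\mathcal P\le0$, this gives $\ddot I\le 4\mathcal E<0$. At $\alpha=3$ the bound holds with equality, which is exactly why $3$ is the endpoint.

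Integrating twice, $I(t)\le I(0)+\dot I(0)t+2\mathcal E t^2$, and the right-hand side becomes negative in finite time. Since $I\ge0$, this contradicts global existence.

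The main point to check carefully is the normalization of $\mathcal P$ and the sign bookkeeping. With $K_\alpha=-\nabla W$ and $W=-\frac{c_{d,\alpha}}{\alpha-1}|x|^{1-\alpha}$, the energy $\frac12\iint|v|^2f+\frac\sigma2\iint W(x-y)\rho\rho$ is conserved for attraction. The identity $x\cdot K_\alpha(x-y)+y\cdot K_\alpha(y-x)=(x-y)\cdot K_\alpha(x-y)$ is what produces the homogeneity factor $\alpha-1$. The finiteness of $\mathcal P$ (needed when $\alpha-1\ge d$) is part of the regularity hypothesis of Theorem~\ref{thm:virial-obstruction}. Likewise, finiteness of $\dot I(0)$ follows from finite kinetic energy and finite $I(0)$ via Cauchy–Schwarz.
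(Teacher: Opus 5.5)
Your proof is correct and is essentially the paper's energy--virial argument. Your $\mathcal P$ equals $-U$ in the paper's notation, since the paper writes $K_\alpha=\nabla W_q$ with $W_q=\frac{c_{d,\alpha}}{q}|x|^{-q}>0$. Your identity $\ddot I=4\mathcal E+2(\alpha-3)\mathcal P$ is exactly the paper's $I''=4H_\sigma+2\sigma(2-q)U$ at $\sigma=+1$, $q=\alpha-1$, and the conclusion then follows by the same double integration against $I\ge0$.
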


\subsection*{Relation with the classical Vlasov--Poisson theory}

The first classical approach, developed by
Pfaffelmoser~\cite{Pfaffelmoser} and
Schaeffer~\cite{Schaeffer}, is Lagrangian. The main object is the characteristic flow. They obtain bounds on the force integrated along characteristics, using the relative geometry of particle trajectories.  This gives control of the velocities and prevents the solution from breaking down.

The second approach, developed by Lions and Perthame~\cite{LionsPerthame}, is based on the propagation of velocity moments, which are converted into spatial integrability estimates for the density and the force field.  Thus the two classical approaches provide two complementary sources of control: the geometry of the characteristic flow on the one hand, and the propagation of moments and spatial integrability on the other.

The local and singularity theory for more singular interactions has also been studied. Bobylev, Dukes, Illner, and Victory~\cite{BDIV1,BDIV2} treated the Vlasov--Manev equation, while Choi and Jeong~\cite{ChoiJeong} developed a local theory for general Vlasov--Riesz interactions and established attractive singularity formation in several regimes.

Directly applying the estimates of Pfaffelmoser and Schaeffer does not suffice to close the characteristic argument throughout the range of Theorem~\ref{thm:gwp}.  Likewise, the estimates of Lions and Perthame do not provide enough control to establish global well-posedness in the full range \(0<\alpha<3\) and \(d\ge4\).

Our proof keeps the Lagrangian viewpoint of the Pfaffelmoser--Schaeffer theory, but adds compensated integrability, in the form developed by Serre~\cite{Serre2019}.  This supplies spacetime control that is not available from the classical characteristic or moment estimates alone.  An important feature is that the compensated-integrability argument can also be applied to suitable portions of the distribution transported by the flow.  This additional flexibility allows the characteristic argument to be closed throughout the range \(0<\alpha<3\).

On the singularity side, Theorem ~\ref{thm:virial-obstruction} follows from the standard energy--virial argument. Non-global classical solutions for the attractive Vlasov--Poisson equation in higher dimensions were already obtained by Horst~\cite{Horst1982}. The virial computation introduces no new difficulty in the present setting.
\subsection*{Open problems}

The results of this paper leave two natural directions open.

The first concerns the lower-dimensional problem
\[
    d\le3,
    \qquad
    0<\alpha\le d.
\]
A substantial part of this range is already understood.  In two
dimensions, global classical solutions for the Vlasov--Poisson equation
were proved by Ukai and Okabe~\cite{UkaiOkabe} and by
Wollman~\cite{Wollman}.  

In three dimensions, global classical solutions
for general data were established by the classical works \cite{Pfaffelmoser,Schaeffer,LionsPerthame}.
Together with the less singular theory, this leaves the supercoulombic range \(d-1<\alpha\le d\) as the principal unresolved region.  

In dimension three, Huang and Kwon~\cite{HuangKwon} prove small-data global existence and
scattering throughout the supercoulombic range
$
    2<\alpha<3
$
in the notation of the present paper.

The second open direction concerns repulsive
interactions with
\[
    \alpha\ge3, \qquad d\ge4.
\]
The virial obstruction in Theorem~\ref{thm:virial-obstruction} is specific
to the attractive sign and gives no corresponding mechanism for breakdown
in the repulsive problem.  There is already small--data global theory beyond the threshold $\alpha=3$.  For the repulsive Vlasov--Poisson equation in
dimensions \(d\ge4\), corresponding in our notation to the Coulomb exponent
\[
    \alpha=d-1\ge3,
\]
Pankavich~\cite{PankavichHighD} constructed global solutions under a
smallness condition on suitable integrated moments.  He also established
scattering and precise asymptotic behavior under additional decay
assumptions.  This suggests that large-data global well-posedness may persist beyond the range proved here.
Determining the maximal range of singular repulsive potentials for which
large-data global well-posedness holds appears to be a natural next step.

Schematically:
\medskip

\begin{center}
\small
\renewcommand{\arraystretch}{1.35}
\begin{tabular}{p{0.22\textwidth} p{0.60\textwidth}}
\toprule
\textbf{Regime} & \textbf{Current status} \\
\midrule

$d=1$
&
Known: GWP for $0<\alpha<1$. \qquad
Open: $\alpha=1$.
\\[4pt]

$d=2$
&
Known: GWP for $0<\alpha\le1$. \qquad
Open: $1<\alpha\le2$.
\\[4pt]

$d=3$
&
Known: GWP for $0<\alpha\le2$; small-data global/scattering for $2<\alpha<3$.

Open: large-data GWP for $2<\alpha\le3$.
\\[4pt]

$d\ge4$
&
Known: large-data GWP for both signs when $0<\alpha<3$.

For \(\alpha\ge3\), negative-energy attractive solutions break down in finite time.

Open: repulsive large-data GWP for $\alpha\ge3$.
\\

\bottomrule
\end{tabular}
\end{center}

\medskip
\noindent\textbf{Organization of the paper.}
Section~2 introduces the bounded characteristic solution class and fixes the basic notation. Section~3 establishes local well-posedness and the continuation criterion, while Section~4 treats the range \(0<\alpha\le1\). Sections~5--9 develop the compensated-integrability and characteristic estimates needed for the range \(1<\alpha<3\), and Section~10 completes the proof of global well-posedness. Section~11 contains the conservation laws and the attractive virial obstruction. The exponent calculations used in the global argument are collected in the appendix.

\medskip
\noindent\textbf{Acknowledgments.}
I am grateful to my advisor, Luis Silvestre, for many helpful
discussions concerning this work. I also thank Stanley Snelson for
introducing me to the Vlasov--Poisson equation.

\section{Bounded-data local theory and continuation}

Throughout this section $d\ge4$ and $0<\alpha<3$.  In particular,
\[
 \alpha+1<d,
\]
so both $K_\alpha$ and its first derivative are locally integrable.

\begin{lemma}[Compact-support field estimate]\label{lem:compact-field}
Let $R>0$ and $g\in L^\infty(\R^d)$ with $\operatorname{ess\,supp}g\subset B_R$.  Then
\[
 K_\alpha*g\in W^{1,\infty}(\R^d),
\]
and
\begin{equation}\label{eq:compact-field}
 \|K_\alpha*g\|_\infty+\|\nabla(K_\alpha*g)\|_\infty
 \le C_{d,\alpha,R}\|g\|_\infty.
\end{equation}
\end{lemma}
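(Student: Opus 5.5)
The plan is a direct size estimate on the kernel and its derivative, using only local integrability; no cancellation is needed. Since $0<\alpha<3\le d-1$, both $|K_\alpha(x)|\simeq|x|^{-\alpha}$ and $|\nabla K_\alpha(x)|\lesssim|x|^{-\alpha-1}$ belong to $L^1_{\mathrm{loc}}(\R^d)$. Away from the origin, $K_\alpha$ is smooth and homogeneous of degree $-\alpha$, so $\nabla K_\alpha$ is homogeneous of degree $-\alpha-1$. This gives the pointwise bound $|\nabla K_\alpha(x)|\le C_{d,\alpha}|x|^{-\alpha-1}$.

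\emph{Step 1 (sup bound).} For $g\in L^\infty$ with $\operatorname{ess\,supp}g\subset B_R$ we have
\[
|K_\alpha*g(x)|\le C\|g\|_\infty\int_{B_R}|x-y|^{-\alpha}\,dy .
\]
The function $|z|^{-\alpha}$ is radially decreasing. By the bathtub (rearrangement) principle, among all sets of measure $|B_R|$ its integral is largest over the centered ball, so the integral over the translate $x-B_R$ satisfies
\[
\int_{B_R}|x-y|^{-\alpha}\,dy\le\int_{B_R(0)}|z|^{-\alpha}\,dz=C_{d,\alpha}R^{d-\alpha}.
\]
This bound is uniform in $x$. It also shows that $K_\alpha*g$ is defined everywhere as an absolutely convergent integral.

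\emph{Step 2 (identifying the gradient).} I would show that the distributional gradient of $K_\alpha$ equals its pointwise gradient, which is an $L^1_{\mathrm{loc}}$ function. Take a test function $\varphi$, integrate by parts on $\{|x|>\varepsilon\}$, and let $\varepsilon\to0$. The boundary term on the sphere $|x|=\varepsilon$ is bounded by
\[
\|\varphi\|_\infty\,\varepsilon^{-\alpha}\,\varepsilon^{d-1}\to0,
\]
because $\alpha<d-1$. Hence there is no delta mass and no principal-value contribution.

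It follows that
\[
\nabla(K_\alpha*g)=(\nabla K_\alpha)*g
\]
in $\mathcal D'$. To justify this I would either mollify $g$, or test directly and apply Fubini, which is legitimate since $|\nabla K_\alpha|*|g|$ is finite. Repeating the rearrangement bound of Step 1 with exponent $\alpha+1<d$ gives
\[
\|\nabla(K_\alpha*g)\|_\infty\le C_{d,\alpha}R^{d-\alpha-1}\|g\|_\infty .
\]

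\emph{Step 3 (conclusion).} The function $K_\alpha*g$ is bounded and has a distributional gradient in $L^\infty$, so it lies in $W^{1,\infty}(\R^d)$. Equivalently, it has a Lipschitz representative, and since the convolution is continuous this representative is the function itself. Estimate \eqref{eq:compact-field} then holds with
\[
C_{d,\alpha,R}=C_{d,\alpha}\bigl(R^{d-\alpha}+R^{d-\alpha-1}\bigr).
\]

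The only point requiring care is Step 2, namely ruling out a singular part of $\nabla K_\alpha$ at the origin. It is exactly here that the hypothesis $\alpha+1<d$, guaranteed by $d\ge4$ and $\alpha<3$, enters the argument.
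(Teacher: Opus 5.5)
Your proof is correct and follows the route the paper itself indicates. The paper gives no separate proof; it relies only on the preceding remark that $\alpha+1<d$ makes both $K_\alpha$ and $\nabla K_\alpha$ locally integrable, and your Steps 1--3 (the rearrangement bound, identifying the distributional gradient with no singular part at the origin, and Fubini) expand exactly that observation.
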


\begin{theorem}[Local bounded strong well-posedness]\label{thm:lwp}
Let $d\ge4$, $0<\alpha<3$, and let $f_0$ satisfy the hypotheses of Theorem~\ref{thm:gwp}.  There exists $T_0>0$ and a unique bounded characteristic solution on $[0,T_0]$.  Moreover, if
\[
 \operatorname{ess\,supp}f_0\subset B_{R_0}\times B_{P_0},
\]
then $T_0$ may be bounded below by a positive quantity depending only on
\[
 d,\alpha,\|f_0\|_\infty,R_0,P_0.
\]
On that interval the phase-space support is bounded in terms of the same quantities.
\end{theorem}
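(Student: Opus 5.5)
The argument is a contraction on the characteristic flow. Lemma~\ref{lem:compact-field} gives a Lipschitz field whenever the density is bounded and compactly supported, so a Picard iteration on flows closes with no regularity assumed on $f_0$. A bounded characteristic solution is understood as $f(t)=f_0\circ Z(t)^{-1}$. Here $Z(t)=(X,V)(t)$ is the flow of $\dot X=V$, $\dot V=E(t,X)$, with $E=\sigma K_\alpha*\rho_f$ and $\rho_f(t)=\int f(t,\cdot,v)\,dv$.

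Fix $T>0$ to be chosen, and let $R=R_0+2T(P_0+1)$ and $P=P_0+1$. Let $\mathcal X_T$ be the set of continuous fields $E:[0,T]\times\R^d\to\R^d$ with
\[
\sup_t\|E(t)\|_{W^{1,\infty}}\le M,
\]
with $M$ to be chosen. For $E\in\mathcal X_T$ the ODE has a unique globally defined Lipschitz flow. Since the flow is divergence free in phase space, $Z(t)$ preserves Lebesgue measure, so $f=f_0\circ Z(t)^{-1}$ satisfies $\|f(t)\|_\infty=\|f_0\|_\infty$. If $MT\le1$, its support stays in $B_{R}\times B_{P}$. It follows that
\[
\|\rho_f(t)\|_\infty\le C_d P^d\|f_0\|_\infty,
\qquad
\operatorname{ess\,supp}\rho_f(t)\subset B_R.
\]
Lemma~\ref{lem:compact-field} then gives
\[
\|\Phi(E)\|_{W^{1,\infty}}\le C_{d,\alpha,R}P^d\|f_0\|_\infty,
\qquad
\Phi(E):=\sigma K_\alpha*\rho_f.
\]
I would choose $M$ equal to this bound, computed with $R$ evaluated at $T=1$, and then take $T\le \min(1,1/M)$. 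Then $\Phi$ maps $\mathcal X_T$ to itself. Continuity in time comes from continuity of $t\mapsto\rho_f(t)$ in a weak or $L^1$ sense, which follows from continuity of the flow.

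The main obstacle is the contraction estimate, because $f_0$ is merely bounded and $\rho_f$ cannot be differentiated. I would measure the difference of two fields only in $\sup_t\|E_1-E_2\|_\infty$ and work through the Lagrangian representation instead of the density. First, Grönwall gives
\[
\sup_{t\le T}|Z_1(t,z)-Z_2(t,z)|\le C(M)\,T\sup_t\|E_1-E_2\|_\infty.
\]
Next, change variables $z=Z_i(t)\cdot$ to write
\[
K_\alpha*\rho_i(t,x)=\int f_0(z)\,K_\alpha\bigl(x-X_i(t,z)\bigr)\,dz.
\]
Then
\[
|\Phi(E_1)-\Phi(E_2)|(x)\le\int f_0(z)\,\bigl|K_\alpha(x-X_1)-K_\alpha(x-X_2)\bigr|\,dz .
\]
Since $\nabla K_\alpha\sim|x|^{-\alpha-1}$ is locally integrable because $\alpha+1<d$, the mean value theorem bounds the integrand by
\[
|X_1-X_2|\int_0^1|x-X_\theta|^{-\alpha-1}\,d\theta,
\qquad
X_\theta:=(1-\theta)X_1+\theta X_2 .
\]
For each $\theta$, the map $z\mapsto(X_\theta,V_\theta)$ is a small perturbation of a measure-preserving map (its Jacobian is close to 1 when $MT\ll1$), so $\int f_0|x-X_\theta|^{-\alpha-1}\,dz\le C(d,\alpha,\|f_0\|_\infty,R,P)$. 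This yields
\[
\|\Phi(E_1)-\Phi(E_2)\|_\infty\le C\,T\,\sup_t\|E_1-E_2\|_\infty,
\]
which is a contraction once $T$ is small. The Jacobian bound on the interpolated map is the delicate point. An alternative is to split $K_\alpha$ into a near part, handled by the $L^\infty$ bound of the density and the smallness of a ball, and a far part, handled by Lipschitz continuity; this costs at most a logarithm, which can still be absorbed.

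The Banach fixed point theorem then gives existence and uniqueness in $\mathcal X_T$. Uniqueness among all bounded characteristic solutions follows by running the same difference estimate on any two solutions over a short interval, since their supports and $W^{1,\infty}$ bounds are a priori controlled, and then iterating. The resulting $T_0$ and the support bounds $R,P$ depend only on $d,\alpha,\|f_0\|_\infty,R_0,P_0$, as claimed in Theorem~\ref{thm:lwp}.
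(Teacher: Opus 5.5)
Your proof is correct, but it takes a different route from the paper. The paper does not prove Theorem~\ref{thm:lwp} directly. It imports existence, uniqueness, the quantitative lifespan and the support bound from Hauray--Jabin's bounded compact-support theory, which the paper says covers singularities up to $d-1$. It then adds, for completeness, only that a bounded compactly supported solution has a $W^{1,\infty}$ field by Lemma~\ref{lem:compact-field}, and hence a bi-Lipschitz, measure-preserving flow.

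You instead run a self-contained Cauchy--Lipschitz/Banach fixed-point scheme. It exploits exactly the feature $\alpha+1<d$, which makes the field genuinely Lipschitz. This gives an explicit lower bound on $T_0$ in terms of your $M$, and no reliance on an external theorem. The citation is shorter and reaches more singular forces: at the Coulomb borderline $\alpha=d-1$ the field is only log-Lipschitz, and a plain contraction in $\sup_t\|E_1-E_2\|_\infty$ is not available there.

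Three comments on the step you flag as delicate.

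First, you can bypass the interpolated map $Z_\theta$ entirely. Splitting according to whether $|a-b|<\frac12\min(|a|,|b|)$ gives the pointwise bound
\[
|K_\alpha(a)-K_\alpha(b)|\le C|a-b|\bigl(|a|^{-\alpha-1}+|b|^{-\alpha-1}\bigr).
\]
With $\eta:=\sup_z|X_1-X_2|$, this yields
\[
\|\Phi(E_1)-\Phi(E_2)\|_\infty\le C\eta\,\sup_x\int(\rho_1+\rho_2)(y)|x-y|^{-\alpha-1}\,dy\le C\eta .
\]
This uses only the density bounds of the two genuine flows.

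Second, if you keep the Jacobian route, two details need fixing.
\begin{itemize}
\item You need $(1+M)T\ll1$, not just $MT\ll1$, because the equation $\dot X=V$ contributes Lipschitz constant $1$ to the phase-space field.
\item You must also record that $Z_\theta$ is injective. This holds because $Z_\theta-\mathrm{id}$ is $(e^{(1+M)T}-1)$-Lipschitz. A lower Jacobian bound alone does not bound a push-forward density.
\end{itemize}

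Third, your near/far alternative loses no logarithm here. The near part is $O(\eta^{d-\alpha})$ with $d-\alpha>1$, hence $O(\eta)$. This matters: a genuine $\eta\log(1/\eta)$ loss could not be absorbed by the Banach fixed point as you claim, and would instead require an Osgood-type argument.
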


\begin{proof}
This is the bounded compact-support strong theory of Hauray--Jabin~\cite[Proposition~2]{HaurayJabin}.  Their theorem applies to force singularities $\alpha\le d-1$.  Here $d\ge4$ and $\alpha<3$, hence $\alpha<d-1$ when $d=4$ and $\alpha<d-1$ a fortiori when $d>4$.  Their quantitative statement gives the asserted lower bound for the local lifespan and the support bound.

For completeness, once a bounded compactly supported solution is available, the velocity-support bound implies
\[
 \|\rho(t)\|_\infty
 \le C_d\|f_0\|_\infty Q(t)^d.
\]
Together with Lemma~\ref{lem:compact-field}, this yields $E_\sigma\in W^{1,\infty}_x$ on every local slab.  Hence the characteristic flow is uniquely defined and bi-Lipschitz.  Since the phase-space vector field $(v,E_\sigma(t,x))$ has zero divergence, Liouville's formula gives measure preservation, and therefore
\[
 f(t)=f_0\circ\Phi_t^{-1},
 \qquad
 \|f(t)\|_p=\|f_0\|_p,
 \quad 1\le p\le\infty.
\]
\end{proof}

\begin{corollary}[Continuation by velocity support]\label{cor:continuation}
Let $[0,T_*)$ be the maximal bounded characteristic lifespan and define
\[
 Q(t):=1+\sup\{|v|:(x,v)\in\operatorname{ess\,supp}f(t)\}.
\]
If
\[
 Q_*:=\sup_{0\le t<T_*}Q(t)<\infty,
\]
then $T_*=\infty$.
\end{corollary}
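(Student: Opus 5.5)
The plan is a standard contradiction argument that combines the quantitative local theory of Theorem~\ref{thm:lwp} with a uniform bound on the spatial support. Suppose $T_*<\infty$ while $Q_*<\infty$. The aim is to show that the solution at times close to $T_*$ satisfies the hypotheses of Theorem~\ref{thm:lwp} with parameters that do not depend on the time. The local lifespan is then bounded below by a fixed $\tau>0$, and restarting near $T_*$ extends the solution past $T_*$.

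\textbf{Step 1: uniform bounds on $[0,T_*)$.} Along characteristics $\dot X=V$, and on the support $|V|\le Q_*$. Hence every point of $\operatorname{ess\,supp}f(t)$ satisfies
\[
|x|\le R_0+Q_*T_*=:R_*,\qquad |v|\le Q_*=:P_*,\qquad 0\le t<T_*.
\]
Measure preservation (Liouville, as in the proof of Theorem~\ref{thm:lwp}) gives $\|f(t)\|_\infty=\|f_0\|_\infty$. Then $\|\rho(t)\|_\infty\le C_d\|f_0\|_\infty Q_*^d$, and Lemma~\ref{lem:compact-field} with radius $R_*$ bounds $\|E_\sigma(t)\|_{W^{1,\infty}}$ uniformly on $[0,T_*)$. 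This uniform Lipschitz bound is what keeps the flow and the solution well defined on the whole interval.

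\textbf{Step 2: restart.} Let $\tau>0$ be the lower bound on the lifespan from Theorem~\ref{thm:lwp}, computed with the parameters $(d,\alpha,\|f_0\|_\infty,R_*,P_*)$. Choose $t_0=\max(0,T_*-\tau/2)$. Then $f(t_0)$ is bounded, nonnegative and supported in $B_{R_*}\times B_{P_*}$, so Theorem~\ref{thm:lwp} gives a bounded characteristic solution $g$ on $[t_0,t_0+\tau]$ with initial datum $f(t_0)$. Concatenate $f$ on $[0,t_0]$ with $g$ on $[t_0,t_0+\tau]$. The fields agree at $t_0$, so the characteristic flows compose: $\Phi_t=\Psi_{t_0\to t}\circ\Phi_{t_0}$. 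The concatenated function is therefore $f_0\circ\Phi_t^{-1}$ and is a bounded characteristic solution on $[0,t_0+\tau]$.

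\textbf{Step 3: contradiction.} By the uniqueness in Theorem~\ref{thm:lwp}, applied on overlapping slabs, the concatenated solution agrees with $f$ on $[t_0,T_*)$. It is defined up to $t_0+\tau>T_*$, which contradicts the maximality of $T_*$. Hence $T_*=\infty$.

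The main point to check is the concatenation and uniqueness step. The difficulty is not analytic. One must confirm that the class of bounded characteristic solutions from Section~2 is stable under restarting at an intermediate time. This requires that the composed flow is the characteristic flow of the concatenated field, and that uniqueness on $[t_0,t_0+\tau]$ identifies the two solutions. Both follow from the uniform $W^{1,\infty}$ bound on the field in Step~1, via Cauchy--Lipschitz uniqueness for the characteristic ODE. The other point needing care is the uniform spatial support bound, which relies on $T_*<\infty$.
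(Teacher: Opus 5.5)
Your proposal is correct and follows the paper's argument. You use the uniform spatial bound $R_*=R_0+T_*Q_*$, the conservation of $\|f_0\|_\infty$, a restart time from the quantitative part of Theorem~\ref{thm:lwp} that does not depend on $t_0$, and a restart close enough to $T_*$ to contradict maximality; your discussion of concatenating flows and propagating uniqueness just spells out details the paper leaves implicit.
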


\begin{proof}
Assume $T_*<\infty$.  If $R_0$ bounds the initial spatial support, then
\[
 \sup\{|x|:(x,v)\in\operatorname{ess\,supp}f(t)\}
 \le R_0+T_*Q_*=:R_*.
\]
Transport preserves $F_0:=\|f_0\|_\infty$, so every restart data $f(t_0)$ has the same $L^\infty$ norm and has support contained in the fixed ball $B_{R_*}\times B_{Q_*}$.  The quantitative part of Theorem~\ref{thm:lwp} therefore supplies a restart time $\delta>0$ depending only on
\[
 d,\alpha,F_0,R_*,Q_*,
\]
and not on $t_0<T_*$.  Choosing $t_0>T_*-\delta/2$ extends the solution beyond $T_*$, a contradiction.
\end{proof}

\section{The easy range \texorpdfstring{$0<\alpha\le1$}{0 < alpha <= 1}}

The threshold $\alpha=1$ is natural for the characteristic argument. In this range no compensated integrability or crossing estimate is needed.

\begin{lemma}[Near--far Riesz estimate]\label{lem:nearfar}
Let $0<\alpha<d$ and let $\rho\ge0$ belong to $L^1(\R^d)\cap L^\infty(\R^d)$. Then
\begin{equation}\label{eq:nearfar}
\|K_\alpha*\rho\|_\infty
\le C_{d,\alpha}\|\rho\|_1^{1-\alpha/d}\|\rho\|_\infty^{\alpha/d}.
\end{equation}
\end{lemma}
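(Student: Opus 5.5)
The plan is the standard near--far splitting of the convolution integral, followed by an optimization in the splitting radius. Since $|K_\alpha(y)|\le c_{d,\alpha}|y|^{-\alpha}$ and $\rho\ge0$, for every $x\in\R^d$ and every $r>0$ we have
\[
|K_\alpha*\rho(x)|\le c_{d,\alpha}\int_{|x-y|<r}\frac{\rho(y)}{|x-y|^{\alpha}}\,dy
+c_{d,\alpha}\int_{|x-y|\ge r}\frac{\rho(y)}{|x-y|^{\alpha}}\,dy .
\]

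On the near region we bound $\rho$ by $\|\rho\|_\infty$. The hypothesis $\alpha<d$ makes $|y|^{-\alpha}$ locally integrable, and
\[
\int_{|z|<r}|z|^{-\alpha}\,dz=\frac{|S^{d-1}|}{d-\alpha}\,r^{d-\alpha}.
\]
So the near term is at most $C\|\rho\|_\infty r^{d-\alpha}$.

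On the far region we use $|x-y|^{-\alpha}\le r^{-\alpha}$, so the far term is at most $C\|\rho\|_1 r^{-\alpha}$. Combining the two pieces gives, uniformly in $x$,
\[
|K_\alpha*\rho(x)|\le C_{d,\alpha}\bigl(\|\rho\|_\infty r^{d-\alpha}+\|\rho\|_1 r^{-\alpha}\bigr).
\]

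Now choose $r$ to balance the two terms: $r=(\|\rho\|_1/\|\rho\|_\infty)^{1/d}$, assuming $\rho\not\equiv0$ (otherwise the claim is trivial). Then both terms equal $\|\rho\|_1^{1-\alpha/d}\|\rho\|_\infty^{\alpha/d}$. Taking the supremum over $x$ gives \eqref{eq:nearfar}.

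There is no genuine obstacle here. The only point requiring care is the local integrability of the kernel near the origin, which is exactly where $\alpha<d$ is used. Measurability and finiteness of the convolution at every point follow from the same two bounds.
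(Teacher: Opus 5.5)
Your proposal is correct and follows essentially the same route as the paper: the near--far split at radius $r$, bounding the near part by $\|\rho\|_\infty r^{d-\alpha}$ (using $\alpha<d$) and the far part by $\|\rho\|_1 r^{-\alpha}$, then choosing $r=(\|\rho\|_1/\|\rho\|_\infty)^{1/d}$ with the zero case treated separately.
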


\begin{proof}
For every $R>0$ and every $x\in\R^d$,
\begin{align*}
|(K_\alpha*\rho)(x)|
&\le C\int_{|x-y|\le R}|x-y|^{-\alpha}\rho(y)\,dy
 +C\int_{|x-y|>R}|x-y|^{-\alpha}\rho(y)\,dy\\
&\le C\|\rho\|_\infty R^{d-\alpha}+C\|\rho\|_1R^{-\alpha}.
\end{align*}
If $\rho\not\equiv0$, choose $R=(\|\rho\|_1/\|\rho\|_\infty)^{1/d}$; the zero case is trivial.
\end{proof}

\begin{proposition}[Global existence in the easy range]\label{prop:easy}
Let $d\ge4$, $0<\alpha\le1$, and let $f_0$ satisfy the assumptions of Theorem~\ref{thm:gwp}. Then the maximal local solution is global. More precisely,
\begin{equation}\label{eq:easyQ}
Q(t)\le Q(0)+C\int_0^t Q(s)^\alpha\,ds.
\end{equation}
Consequently, after enlarging the data-dependent constant $C$,
\[
Q(t)\le
\begin{cases}
\bigl(Q(0)^{1-\alpha}+C(1-\alpha)t\bigr)^{1/(1-\alpha)},&0<\alpha<1,\\
Q(0)e^{Ct},&\alpha=1.
\end{cases}
\]
\end{proposition}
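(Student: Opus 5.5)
The plan is to bound the field uniformly along characteristics with Lemma~\ref{lem:nearfar}, feed that bound into the ODE for velocities to get \eqref{eq:easyQ}, and then close with Corollary~\ref{cor:continuation}. Throughout, the solution is the maximal bounded characteristic solution on $[0,T_*)$ from Theorem~\ref{thm:lwp}.

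\textbf{Density bounds.} On every compact subinterval of $[0,T_*)$ the field $E_\sigma$ is Lipschitz in $x$, and the flow $\Phi_t$ preserves measure. Consequently
\[
\|\rho(t)\|_1=\|f_0\|_1=:M.
\]
Because $f(t)$ is supported in velocities $|v|\le Q(t)$, we also have
\[
\|\rho(t)\|_\infty\le C_d\|f_0\|_\infty Q(t)^d.
\]

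\textbf{Field bound.} Lemma~\ref{lem:nearfar} applies since $\alpha<d$, and it gives
\[
\|E_\sigma(t)\|_\infty\le C_{d,\alpha}M^{1-\alpha/d}\bigl(C_d\|f_0\|_\infty Q(t)^d\bigr)^{\alpha/d}=C\,Q(t)^\alpha .
\]
Here $C$ depends only on $d$, $\alpha$, $\|f_0\|_1$ and $\|f_0\|_\infty$. The sign $\sigma$ plays no role, because only $|E_\sigma|$ enters.

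\textbf{Velocity bound \eqref{eq:easyQ}.} Take a characteristic $(X(s),V(s))$ starting in $\operatorname{ess\,supp}f_0$. It satisfies $\dot V=E_\sigma(s,X(s))$, so
\[
|V(t)|\le |V(0)|+\int_0^t C\,Q(s)^\alpha\,ds.
\]
Taking the supremum over the support, and using $f(t)=f_0\circ\Phi_t^{-1}$, gives \eqref{eq:easyQ}. The $+1$ in the definition of $Q$ is harmless: it keeps $Q\ge1$, so $Q^\alpha$ is well behaved near zero.

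\textbf{Closing the estimate.} The only step needing care is the Bihari/Gronwall comparison, where $Q$ need not be continuous.

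\begin{itemize}
\item To handle this, I would compare with the right-hand side
\[
Y(t):=Q(0)+C\int_0^tQ^\alpha\,ds .
\]
Then $Y$ is absolutely continuous and nondecreasing, $Q\le Y$, and $Y'=CQ^\alpha\le CY^\alpha$.
\item For $\alpha<1$, this gives $(Y^{1-\alpha})'\le C(1-\alpha)$, which yields the stated power bound.
\item For $\alpha=1$, it gives $(\log Y)'\le C$, hence $Q(t)\le Q(0)e^{Ct}$.
\end{itemize}

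Measurability and local boundedness of $Q$ follow from the local theory. If one prefers, $Q$ can be replaced by the sup over characteristics, which is monotone-dominated in the same way.

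In either case $Q$ is bounded on $[0,T_*)$ whenever $T_*<\infty$. Corollary~\ref{cor:continuation} then forces $T_*=\infty$.
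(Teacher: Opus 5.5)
Your proposal is correct and follows the paper's proof step for step. The steps are: conservation of mass and of $\|f_0\|_\infty$; the bound $\|\rho(t)\|_\infty\le C_dF_0Q(t)^d$; Lemma~\ref{lem:nearfar} to get $\|E_\sigma(t)\|_\infty\le CQ(t)^\alpha$; integration along characteristics to obtain \eqref{eq:easyQ}; and Corollary~\ref{cor:continuation} to conclude $T_*=\infty$. Your explicit comparison function $Y$ just spells out the ``scalar comparison with $y'=Cy^\alpha$'' that the paper states in one line.
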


\begin{proof}
Transport preserves $F_0:=\|f_0\|_\infty$ and the total mass $M:=\iint f_0$. At any time for which the solution exists, its velocity support lies in a ball of radius $Q(t)$, so
\[
\|\rho(t)\|_\infty\le C_dF_0Q(t)^d.
\]
Lemma~\ref{lem:nearfar} therefore gives
\[
\|E_\sigma(t)\|_\infty
\le CM^{1-\alpha/d}\|\rho(t)\|_\infty^{\alpha/d}
\le CQ(t)^\alpha.
\]
For every characteristic,
\[
|V(t)|\le |v_0|+\int_0^t\|E_\sigma(s)\|_\infty\,ds.
\]
Taking the supremum gives~\eqref{eq:easyQ}. Scalar comparison with $y'=Cy^\alpha$ gives the displayed bounds. Hence $Q(t)$ cannot blow up at finite time, and Corollary~\ref{cor:continuation} gives global existence.
\end{proof}

\section{The hard range \texorpdfstring{$1<\alpha<3$}{1 < alpha < 3}: roadmap}
Henceforth, assume
\[
1<\alpha<3.
\]
Fix $T<T_*$ and define
\begin{equation}\label{eq:a-def}
a(z):=\int_0^T|E_\sigma(t,X(t;z))|\,dt,
\qquad
J(\lambda):=\int_{\{a\ge\lambda\}}f_0(z)a(z)\,dz.
\end{equation}
\medskip
\noindent\textbf{Proof architecture.}
The hard-range argument has three steps:
\begin{enumerate}[label=\textbf{\arabic*.}]
\item A master kinetic-tensor estimate yields the first budget
\[
\int f_0a\le C_T,
\]
and therefore the initial tail bound $J(\lambda)\le C_T$.
\item In dimensions $4\le d\le6$, only when the initial tail does not already close the characteristic argument, the same tensor estimate applied to the level sets $\{a\ge\lambda\}$ amplifies the tail to
\[
J(\lambda)\le C_T\lambda^{-\beta}
\qquad\text{for some }\beta>3.
\]
\item A tail exponent $\beta$ closes the characteristic argument whenever
\[
(d-\alpha)(d+\beta+2)>(d-1)^2.
\]
\end{enumerate}
Thus the proof uses one tensor estimate, one tail bootstrap, and one crossing mechanism. We emphasize that Serre's compensated integrability is used in both bounding the first budget and for the tail amplification.

\section{Serre's estimate and the master Lagrangian tensor bound}

\begin{theorem}[Divergence-controlled slab estimate]\label{thm:serre}
Let $H=(t_-,t_+)\times\R^d$ and let $S$ be a symmetric positive-semidefinite $(d+1)\times(d+1)$ tensor whose entries belong to $L^1(H)$.  Assume its row-wise divergence is a bounded vector-valued measure and its  normal traces $Se_0$ at $t=t_\pm$ are bounded measures.  Then
\begin{equation}\label{eq:serre}
 \int_H(\det S)^{1/d}
 \le C_d\Bigl(
 \|Se_0(t_-)\|_{\cM}
 +\|Se_0(t_+)\|_{\cM}
 +\|\Div S\|_{\cM(H)}
 \Bigr)^{1+1/d}.
\end{equation}
\end{theorem}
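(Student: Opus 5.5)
This slab estimate is essentially Serre's compensated-integrability inequality for divergence-controlled positive symmetric tensors (Serre 2019, Theorem~2.3 and its slab/Cauchy-problem variant). The plan is to reduce it to the whole-space version by a cut-off in time, and to prove that version by optimal transport as in Serre's argument.

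\textbf{Step 1: whole-space inequality.} Work on $\R^{n}$ with $n=d+1$ (spacetime). For $S\ge0$ with compact support and $\Div S\in\cM$, Serre's inequality reads
\[
\int_{\R^n}(\det S)^{1/(n-1)}\le C_n\|\Div S\|_{\cM}^{n/(n-1)}.
\]
Since $n-1=d$ and $n/(n-1)=1+1/d$, these are exactly the exponents in \eqref{eq:serre}.

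To prove it, first mollify $S$. Mollification preserves positive semidefiniteness, does not increase $\|\Div S\|_{\cM}$, and $(\det)^{1/(n-1)}$ is concave on positive matrices, so the left side passes to the limit via Fatou or upper semicontinuity. Then follow Serre's duality argument. Take the Brenier map $\nabla\phi$ transporting a normalized density built from $(\det S)^{1/(n-1)}$ (or a uniform measure) onto the unit ball; $\phi$ is convex with $D^2\phi\ge0$. Test $\Div S$ against $\nabla\phi$:
\[
\int \operatorname{tr}(S\,D^2\phi)=-\int \nabla\phi\cdot \Div S\le \|\Div S\|_{\cM},
\]
using $|\nabla\phi|\le1$. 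For positive matrices, $\operatorname{tr}(SD^2\phi)\ge n(\det S\det D^2\phi)^{1/n}$ by AM--GM. The Monge--Amp\`ere equation relates $\det D^2\phi$ to the density ratio, and optimizing the normalization gives the claim. This part is standard, and I would cite Serre rather than redo it.

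\textbf{Step 2: the slab.} Extend $S$ by zero outside $H$. The distributional divergence then acquires the boundary terms $Se_0(t_-)\otimes\delta_{t=t_-}$ and $-Se_0(t_+)\otimes\delta_{t=t_+}$, since the normal to the slab is $e_0$. So the extended tensor $\bar S$ satisfies
\[
\|\Div\bar S\|_{\cM(\R^{n})}\le\|\Div S\|_{\cM(H)}+\|Se_0(t_-)\|_{\cM}+\|Se_0(t_+)\|_{\cM}.
\]
This follows by testing against $\psi\in C_c^\infty$ and integrating by parts in $t$, which is exactly how the normal traces are defined.

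The remaining issue is compact support in $x$. Apply Step 1 to $\chi_R(x)\bar S$, with $\chi_R$ a cut-off at scale $R$. The commutator $\bar S\nabla\chi_R$ has mass at most $R^{-1}\|S\|_{L^1(H)}$, which tends to zero. This is where the hypothesis that the entries lie in $L^1(H)$ is used. Let $R\to\infty$ using monotone convergence on the left.

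\textbf{Main obstacle.} The main difficulty is Step 1, namely the rigorous Monge--Amp\`ere/transport duality at low regularity. I would take it from Serre. The remaining care points are the trace formula and the approximation, i.e.\ that mollifying a tensor whose traces are only measures still gives convergence of $\int(\det S)^{1/d}$. For the approximation I would use Fatou applied to the mollified determinants together with concavity, which gives a lower-semicontinuity-type inequality in the needed direction.
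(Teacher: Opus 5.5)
Your proposal is correct and is essentially the paper's argument: cut off in $x$, bound the commutator $S\nabla\chi_R$ by $R^{-1}\|S\|_{L^1(H)}$, apply Serre's inequality, and let $R\to\infty$. The only cosmetic difference is that you zero-extend in time and use the compactly supported whole-space form of Serre's inequality, with the traces appearing as Dirac terms in $\Div\bar S$, whereas the paper applies Serre's bounded-domain theorem with the traces as boundary data. (One small slip in your Step 1 sketch: $(\det)^{1/d}$ is not concave on $(d+1)\times(d+1)$ positive matrices, only $(\det)^{1/(d+1)}$ is, but Fatou's lemma with a.e.\ convergence of the mollified tensors already gives the lower semicontinuity you need.)
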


\begin{proof}
This is the slab form of Serre's compensated-integrability inequality.  We indicate the reduction to the published bounded-domain result~\cite[Theorem~2.2]{Serre2019}.  Let $\chi_R(x)$ be a nonnegative spatial cutoff, equal to one on $B_R$, supported in $B_{2R}$, with $|\nabla\chi_R|\lesssim R^{-1}$, and apply Serre's bounded-domain theorem to $\chi_RS$ on a smooth cylinder containing $(t_-,t_+)\times B_{2R}$.  The spatial normal trace vanishes because the cutoff does, while
\[
 \Div(\chi_RS)=\chi_R\Div S+S\nabla\chi_R.
\]
Thus the right-hand side is bounded by the two traces, $\|\Div S\|_{\cM}$, and
\[
 R^{-1}\int_{(t_-,t_+)\times(B_{2R}\setminus B_R)}|S|.
\]
The last term tends to zero because $S\in L^1(H)$.  Fatou's lemma on the left and $R\to\infty$ give \eqref{eq:serre}.
\end{proof}

\begin{lemma}[Kinetic determinant bound]\label{lem:det}
Let $g\in L^1(\R^d)\cap L^\infty(\R^d)$, $g\ge0$, have finite second moment, and satisfy $0\le g\le F$. Define
\[
\rho_g=\int g,
\qquad
j_g=\int vg,
\qquad
\Pi_g=\int v\otimes vg,
\qquad
A_g=
\begin{pmatrix}
\rho_g&j_g^T\\
j_g&\Pi_g
\end{pmatrix}.
\]
Then
\begin{equation}\label{eq:det-bound}
\det A_g\ge c_dF^{-2}\rho_g^{d+3}.
\end{equation}
\end{lemma}

\begin{proof}
If $\rho_g=0$ there is nothing to prove. Set
\[
u:=\frac{j_g}{\rho_g},
\qquad
\Sigma:=\frac1{\rho_g}\int(v-u)\otimes(v-u)g(v)\,dv.
\]
The covariance is positive definite: if $\xi^T\Sigma\xi=0$ for some $\xi\ne0$, then $g$ is supported almost everywhere on the affine hyperplane $\xi\cdot(v-u)=0$, a Lebesgue-null set, contradicting $\rho_g>0$. The Schur complement gives
\[
\det A_g=\rho_g^{d+1}\det\Sigma.
\]
For the probability measure $d\mu=\rho_g^{-1}g\,dv$,
\[
\int(v-u)^T\Sigma^{-1}(v-u)\,d\mu
=\operatorname{tr}(\Sigma^{-1}\Sigma)=d.
\]
Markov's inequality gives
\[
\mu\bigl\{(v-u)^T\Sigma^{-1}(v-u)\le2d\bigr\}\ge\frac12.
\]
Let this be $\mathcal E$. Since $g\le F$,
\[
\frac{\rho_g}{2}\le F|\mathcal E|.
\]
The change of variables $v=u+\Sigma^{1/2}w$ gives
\[
|\mathcal E|=|B_{\sqrt{2d}}|\sqrt{\det\Sigma}=C_d\sqrt{\det\Sigma}.
\]
Hence $\det\Sigma\ge c_dF^{-2}\rho_g^2$, and~\eqref{eq:det-bound} follows.
\end{proof}

\begin{proposition}[Master estimate for a Lagrangian component]\label{prop:master}
Fix a solution slab $0\le t\le T<T_*$. Let $G\subset\supp f_0$ be Borel and define
\[
g_G(t,\Phi_t(z)):=f_0(z)\1_G(z).
\]
Set
\[
M_G:=\int_G f_0(z)\,dz,
\qquad
J_G:=\int_G f_0(z)a(z)\,dz,
\qquad
\rho_G(t,x):=\int g_G(t,x,v)\,dv.
\]
Then $g_G$ solves
\[
\partial_tg_G+v\cdot\nabla_xg_G+E_\sigma\cdot\nabla_vg_G=0
\]
in distributions, and
\begin{equation}\label{eq:master}
\int_0^T\!\int_{\R^d}\rho_G^{1+3/d}\,dx\,dt
\le
C_dF_0^{2/d}M_G^{1/d}\bigl((1+Q_0)M_G+J_G\bigr),
\end{equation}
where
\[
F_0:=\|f_0\|_\infty,
\qquad
Q_0:=\sup\{|v|:(x,v)\in\supp f_0\}.
\]
The constant in~\eqref{eq:master} is independent of $G$, $T$, and the velocity support on $[0,T]$.
\end{proposition}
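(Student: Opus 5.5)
We build the kinetic stress tensor of the Lagrangian component $g_G$, bound its determinant from below using Lemma~\ref{lem:det}, and bound its divergence and boundary traces from above, then combine the two through Theorem~\ref{thm:serre}.

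\textbf{Step 1: the transport equation.} Since $g_G(t)=(f_0\1_G)\circ\Phi_t^{-1}$ and the flow is bi-Lipschitz and measure preserving on $[0,T]$ (Theorem~\ref{thm:lwp}), we have, for any test function $\varphi(t,x,v)$,
\[
\iint g_G(t)\varphi(t)=\int_G f_0(z)\,\varphi(t,\Phi_t(z))\,dz .
\]
Differentiating in $t$ along characteristics gives the weak Vlasov equation with the same field $E_\sigma$. In particular $0\le g_G\le F_0$, and $g_G$ is compactly supported on the slab.

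\textbf{Step 2: the tensor and its divergence.} Set
\[
S:=\int\begin{pmatrix}1&v^T\\ v&v\otimes v\end{pmatrix}g_G\,dv=A_{g_G}.
\]
This is symmetric and positive semidefinite, and its entries lie in $L^1$ on the slab because the supports are compact. Taking the velocity moments $1$ and $v$ of the equation gives
\[
\partial_t\rho_G+\nabla\cdot j_G=0,\qquad \partial_tj_G+\Div\Pi_G=E_\sigma\rho_G .
\]
So $\Div S=(0,E_\sigma\rho_G)$. Passing to Lagrangian coordinates,
\[
\|\Div S\|_{\cM(H)}=\int_0^T\!\!\int|E_\sigma|\rho_G\,dx\,dt=\int_G f_0(z)\,a(z)\,dz=J_G .
\]
This identity is exactly why $a$ was defined along characteristics.

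\textbf{Step 3: the boundary traces.} The normal trace at time $t$ is $Se_0=(\rho_G,j_G)$, with mass at most $\int g_G(1+|v|)$. At $t=0$ this is at most $(1+Q_0)M_G$. At $t=T$ it is at most $M_G+\int_G f_0|V(T;z)|\,dz$, and $|V(T)|\le Q_0+a(z)$, so this trace is at most $(1+Q_0)M_G+J_G$. The velocity support on $[0,T]$ never enters, which explains the claimed uniformity of the constant.

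\textbf{Step 4: conclusion.} Lemma~\ref{lem:det} with $F=F_0$ gives
\[
(\det S)^{1/d}\ge c_dF_0^{-2/d}\rho_G^{1+3/d}.
\]
Theorem~\ref{thm:serre} then gives
\[
\int_0^T\!\!\int\rho_G^{1+3/d}\le C_dF_0^{2/d}\bigl((1+Q_0)M_G+J_G\bigr)^{1+1/d}.
\]

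\textbf{Main obstacle.} This bound has exponent $1+1/d$, whereas \eqref{eq:master} has the factor $M_G^{1/d}\bigl((1+Q_0)M_G+J_G\bigr)$. To bridge the gap I would first try a Galilean and scaling argument: apply the estimate to the tensor built from the rescaled velocities $\lambda v$, or equivalently to $\operatorname{diag}(\mu,1,\dots,1)\,S\,\operatorname{diag}(\mu,1,\dots,1)$.
\begin{itemize}
\item Under this conjugation the determinant scales by $\mu^2$.
\item The time component $\rho$ of the traces scales by $\mu^2$.
\item The momentum parts scale by $\mu$.
\item The divergence keeps its structure after rescaling time, so that $\partial_t$ picks up the factor $\mu$ consistently.
\end{itemize}
Then I would optimize over $\mu$ to balance the mass part $M_G$ against the momentum and force part $Q_0M_G+J_G$, which should produce the product $M_G^{1/d}\bigl((1+Q_0)M_G+J_G\bigr)$. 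Checking that this rescaling preserves the divergence structure, for instance by rescaling $t\mapsto\mu t$ together with the force, is the step I expect to require care.
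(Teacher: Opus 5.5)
Your proposal is correct and is essentially the paper's proof. The paper conjugates by $D_\eta=\diag(\eta^{1/2},\eta^{-1/2},\dots,\eta^{-1/2})$ with $s=\eta t$; this is your $\diag(\mu,1,\dots,1)$ conjugation up to the scalar factor $\eta^{-1}$, which is irrelevant because Serre's inequality is homogeneous. It also takes $\eta=\bigl((1+Q_0)M_G+J_G\bigr)/M_G$, which is exactly your balancing choice.

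The step you flagged does close, and the force needs no rescaling. With $s=\mu t$ you get $\Div_{s,x}\widetilde S=(0,\rho_GE_\sigma)$ of total mass $\mu J_G$, traces of mass at most $\mu^2M_G+\mu(Q_0M_G+J_G)$, and a left side multiplied by $\mu^{1+2/d}$. Theorem~\ref{thm:serre} then yields $\int_0^T\!\int(\det S)^{1/d}\lesssim\mu^{-1/d}(\mu M_G+Q_0M_G+J_G)^{1+1/d}$, which is $\lesssim M_G^{1/d}\bigl((1+Q_0)M_G+J_G\bigr)$ at that choice of $\mu$.
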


\begin{proof}
If $M_G=0$, then $g_G=0$ almost everywhere and there is nothing to prove. We may also suppose $F_0>0$, since $F_0=0$ gives the zero solution.

Because the phase flow preserves Lebesgue measure, for every smooth compactly supported test function $\varphi$,
\[
\int g_G(t,z)\varphi(t,z)\,dz
=
\int_G f_0(z_0)\varphi(t,\Phi_t(z_0))\,dz_0.
\]
Differentiating only the smooth test function along the flow proves the distributional transport equation; in particular, the rough indicator $\1_G$ is never differentiated.

Define
\[
j_G=\int vg_G\,dv,
\qquad
\Pi_G=\int v\otimes vg_G\,dv,
\qquad
A_G:=
\begin{pmatrix}
\rho_G&j_G^T\\
j_G&\Pi_G
\end{pmatrix}.
\]
Pointwise in $(t,x)$,
\begin{equation}\label{eq:AG-positive}
A_G(t,x)
=
\int_{\R^d}
\binom{1}{v}\binom{1}{v}^{\!T}g_G(t,x,v)\,dv
\ge0.
\end{equation}
Thus $A_G$ is symmetric positive semidefinite. Because $0\le g_G\le f$ and the fixed solution slab has compact phase-space support, every entry of $A_G$ belongs to $L^1((0,T)\times\R^d)$ and hence defines a bounded measure.

Testing the weak transport equation with a velocity cutoff that equals one on the phase support, and then with $v_i$ times that cutoff, gives
\[
\partial_t\rho_G+\nabla_x\cdot j_G=0,
\qquad
\partial_tj_G+\Div_x\Pi_G=\rho_GE_\sigma.
\]
Therefore
\begin{equation}\label{eq:AG-divergence}
\Div_{t,x}A_G=\binom{0}{\rho_GE_\sigma},
\qquad
\|\Div A_G\|_{\cM}
=\int_0^T\!\int\rho_G|E_\sigma|=J_G.
\end{equation}
The equality on the right follows from the Lagrangian representation and Tonelli.

Along a characteristic,
\[
|V(t;z)|\le |v_0|+a(z)\le Q_0+a(z),
\]
so for every $0\le t\le T$,
\begin{equation}\label{eq:jG}
\|j_G(t)\|_1\le Q_0M_G+J_G.
\end{equation}
Moreover
\[
\int\rho_G(t,x)\psi(x)\,dx
=
\int_Gf_0(z)\psi(X(t;z))\,dz,
\]
\[
\int j_G(t,x)\psi(x)\,dx
=
\int_Gf_0(z)V(t;z)\psi(X(t;z))\,dz.
\]
Both are continuous in $t$ by dominated convergence; for the second formula use the integrable majorant $f_0(Q_0+a)\1_G$.

For complete trace bookkeeping, extend $A_G$ by zero outside the slab and denote the extension by $\widetilde A_G$. Distributionally on $\R_t\times\R_x^d$,
\begin{equation}\label{eq:AG-zero-extension}
\Div\widetilde A_G
=
\binom{0}{\rho_GE_\sigma}\1_{(0,T)}
+ A_G(0,\cdot)e_0\,\delta_{t=0}
- A_G(T,\cdot)e_0\,\delta_{t=T}.
\end{equation}
Hence the normal traces required in Theorem~\ref{thm:serre} are precisely
\[
A_G(t)e_0=(\rho_G(t),j_G(t))^Tdx,
\qquad t\in\{0,T\},
\]
and by~\eqref{eq:jG},
\begin{equation}\label{eq:AG-trace-bound}
\|A_G(t)e_0\|_{\cM}
\le M_G+Q_0M_G+J_G.
\end{equation}

Set
\[
C_G:=(1+Q_0)M_G+J_G,
\qquad
\eta:=\frac{C_G}{M_G},
\qquad
s=\eta t,
\]
and define
\[
S_G(s,x):=
\begin{pmatrix}
\eta\rho_G&j_G^T\\
j_G&\eta^{-1}\Pi_G
\end{pmatrix},
\qquad t=s/\eta.
\]
Since
\[
S_G=D_\eta A_GD_\eta,
\qquad
D_\eta=\diag(\eta^{1/2},\eta^{-1/2},\ldots,\eta^{-1/2}),
\]
we have $S_G\ge0$. A direct row-wise computation gives
\[
\Div_{s,x}S_G
=
\binom{0}{\eta^{-1}\rho_GE_\sigma}.
\]
After the change of variables $ds=\eta\,dt$,
\[
\|\Div S_G\|_{\cM}=J_G.
\]
At either endpoint,
\[
\|S_Ge_0\|_{\cM}
\le \eta M_G+Q_0M_G+J_G
\le2C_G.
\]
Thus every hypothesis of Theorem~\ref{thm:serre} has now been verified explicitly, and it yields
\[
\int_0^{\eta T}\!\int(\det S_G)^{1/d}
\le C_dC_G^{1+1/d}.
\]
Furthermore
\[
\det S_G=\eta^{1-d}\det A_G,
\qquad ds=\eta\,dt,
\]
so
\[
\int_0^{\eta T}\!\int(\det S_G)^{1/d}
=
\eta^{1/d}\int_0^T\!\int(\det A_G)^{1/d}.
\]
Since $\eta=C_G/M_G$,
\begin{equation}\label{eq:detAG-integral}
\int_0^T\!\int(\det A_G)^{1/d}
\le C_dM_G^{1/d}C_G.
\end{equation}
Finally, Lemma~\ref{lem:det}, applied pointwise to $g_G(t,x,\cdot)$ with $F=F_0$, gives
\[
(\det A_G)^{1/d}
\ge c_dF_0^{-2/d}\rho_G^{1+3/d}.
\]
Combining this with~\eqref{eq:detAG-integral} proves~\eqref{eq:master}.
\end{proof}

\section{The first accumulated-field budget}

Set
\[
p_0:=1+\frac3d=\frac{d+3}{d},
\qquad
X_T:=\int_0^T\!\int\rho^{p_0},
\qquad
B_1(T):=\int f_0(z)a(z)\,dz.
\]
By measure preservation and Tonelli,
\begin{equation}\label{eq:B1-rep}
B_1(T)=\int_0^T\!\int_{\R^d}\rho(t,x)|E_\sigma(t,x)|\,dx\,dt.
\end{equation}

\begin{proposition}[First budget]\label{prop:firstbudget}
For every finite $T<T_*$,
\begin{equation}\label{eq:firstbudget}
X_T\le C(1+T),
\qquad
B_1(T)\le C_T,
\end{equation}
with constants independent of the velocity support on $[0,T]$.
\end{proposition}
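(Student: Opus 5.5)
The plan is to combine the master estimate of Proposition~\ref{prop:master}, applied to the whole distribution, with a Hardy--Littlewood--Sobolev bound on the accumulated field $B_1(T)$. This gives a closed, sublinear inequality for $X_T$. The key point is that $\alpha<3$ makes the exponent of $X_T$ strictly less than one.

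\textbf{Step 1: master estimate for the whole distribution.} Take $G=\supp f_0$ in Proposition~\ref{prop:master}. Then $g_G=f$, $\rho_G=\rho$, $M_G=M:=\iint f_0$, and $J_G=B_1(T)$. Estimate~\eqref{eq:master} gives
\[
X_T\le C_dF_0^{2/d}M^{1/d}\bigl((1+Q_0)M+B_1(T)\bigr)=:C_1+C_2B_1(T),
\]
with $C_1,C_2$ depending only on the data. They do not depend on $T$ or on the velocity support.

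Since the slab $[0,T]$ with $T<T_*$ has compact phase-space support and bounded density, we know a priori that $X_T<\infty$ and $B_1(T)<\infty$. This finiteness is what allows the absorption in Step 3.

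\textbf{Step 2: bound $B_1$ by $X_T$.} Use the representation~\eqref{eq:B1-rep} and $|E_\sigma|\le C|x|^{-\alpha}*\rho$. For each fixed $t$, the bilinear Hardy--Littlewood--Sobolev inequality gives
\[
\int\rho|E_\sigma|\le C\iint\frac{\rho(x)\rho(y)}{|x-y|^{\alpha}}\,dx\,dy\le C\|\rho(t)\|_q^2,
\qquad q=\frac{2d}{2d-\alpha}.
\]
This is valid because $0<\alpha<d$.

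Next interpolate between $L^1$ and $L^{p_0}$:
\[
\|\rho\|_q\le M^{1-\theta}\|\rho\|_{p_0}^{\theta},
\qquad
\theta\cdot\frac{3}{d+3}=\frac{\alpha}{2d},
\]
so that $\theta=\alpha(d+3)/(6d)$. We have $\theta\le1$ because $\alpha<3\le 6d/(d+3)$, so $q\le p_0$ and the interpolation is legitimate.

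Then $\|\rho\|_{p_0}^{2\theta}=\bigl(\int\rho^{p_0}\bigr)^{2\theta/p_0}=\bigl(\int\rho^{p_0}\bigr)^{\alpha/3}$. Integrating in time and applying Hölder in $t$ with the exponent $\alpha/3<1$ gives
\[
B_1(T)\le CM^{2(1-\theta)}\int_0^T\Bigl(\int\rho^{p_0}\Bigr)^{\alpha/3}dt\le C\,T^{1-\alpha/3}X_T^{\alpha/3}.
\]

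\textbf{Step 3: close the bound.} Combining Steps 1 and 2,
\[
X_T\le C_1+C\,T^{1-\alpha/3}X_T^{\alpha/3}.
\]
Apply Young's inequality with conjugate exponents $3/\alpha$ and $3/(3-\alpha)$, absorbing $\tfrac12X_T$ into the left side. This is legitimate because $X_T$ is finite. The result is $X_T\le C(1+T)$. Substituting back into Step 2 gives $B_1(T)\le C T^{1-\alpha/3}(1+T)^{\alpha/3}=:C_T$.

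\textbf{Main obstacle.} The only delicate step is the exponent bookkeeping in Step 2. The power $2\theta/p_0$ must equal exactly $\alpha/3$, and this must be strictly less than $1$. That is precisely where $\alpha<3$ enters, and it is what makes the argument close without any smallness assumption.
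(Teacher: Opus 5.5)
Your proposal is correct and follows the paper's proof essentially step for step. The shared steps are: the master estimate with $G=\supp f_0$, the bilinear Hardy--Littlewood--Sobolev bound at $p_\alpha=2d/(2d-\alpha)$, interpolation with $\theta_\alpha=\alpha(d+3)/(6d)$ giving the key identity $2\theta_\alpha/p_0=\alpha/3$, H\"older in time, and Young's-inequality absorption, which is justified by the a priori finiteness of $X_T$ on the slab. Your explicit check that $\theta_\alpha\le1$ is a small addition the paper leaves implicit.
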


\begin{proof}
Apply Proposition~\ref{prop:master} with $G=\supp f_0$. Then $M_G=M$ and $J_G=B_1(T)$, so
\begin{equation}\label{eq:X-vs-B}
X_T\le C_0+C_1B_1(T).
\end{equation}
Since $|K_\alpha(x)|\le C|x|^{-\alpha}$,
\[
B_1(T)
\le
C\int_0^T\iint\frac{\rho(t,x)\rho(t,y)}{|x-y|^\alpha}\,dx\,dy\,dt.
\]
By the bilinear Hardy--Littlewood--Sobolev inequality~\cite{LiebLoss},
\[
B_1(T)\le C\int_0^T\|\rho(t)\|_{p_\alpha}^2\,dt,
\qquad
p_\alpha:=\frac{2d}{2d-\alpha}.
\]
Interpolate between $L^1_x$ and $L^{p_0}_x$:
\[
\|\rho(t)\|_{p_\alpha}
\le M^{1-\theta_\alpha}\|\rho(t)\|_{p_0}^{\theta_\alpha},
\qquad
\theta_\alpha:=\frac{\alpha(d+3)}{6d}.
\]
The key identity is
\[
\frac{2\theta_\alpha}{p_0}=\frac\alpha3.
\]
Hence H\"older in time gives
\begin{equation}\label{eq:B-vs-X}
B_1(T)\le C T^{1-\alpha/3}X_T^{\alpha/3}.
\end{equation}
Combining~\eqref{eq:X-vs-B} and~\eqref{eq:B-vs-X},
\[
X_T\le C_0+C_2T^{1-\alpha/3}X_T^{\alpha/3}.
\]
Since $\alpha<3$, Young's inequality absorbs the last term and gives $X_T\le C(1+T)$. Substitution into~\eqref{eq:B-vs-X} gives $B_1(T)\le C_T$.

All quantities are finite on the fixed solution slab before the estimate is applied: compact support gives bounded $E_\sigma$, hence $B_1(T)<\infty$. The resulting bounds are independent of that a priori support size.
\end{proof}

In particular,
\begin{equation}\label{eq:initial-tail}
J(\lambda)\le B_1(T)\le C_T
\qquad\text{for every }\lambda>0.
\end{equation}

\section{Tail amplification in dimensions \texorpdfstring{$4,5,6$}{4, 5, 6}}

For $s>0$, write
\[
G_s:=\{z:a(z)\ge s\},
\qquad
M_s:=\int_{G_s}f_0(z)\,dz,
\qquad
J_s:=J(s)=\int_{G_s}f_0(z)a(z)\,dz,
\]
and let $\rho_s$ be the density of the transported component associated with $G_s$. Then
\begin{equation}\label{eq:MsJs}
M_s\le s^{-1}J_s.
\end{equation}

\begin{proposition}[Tail amplification]\label{prop:tailbootstrap}
Assume
\[
\frac{3d}{d+3}<\alpha<3.
\]
Suppose that for some $\beta\ge0$,
\begin{equation}\label{eq:tailhyp}
J_s\le C_{\beta,T}s^{-\beta}
\end{equation}
for all sufficiently large $s$. Then, for all sufficiently large $\lambda$,
\begin{equation}\label{eq:tailimprove}
J_\lambda\le C'_{\beta,T}\lambda^{-\mathcal F_{d,\alpha}(\beta)},
\end{equation}
where the explicit improvement exponent $\mathcal F_{d,\alpha}(\beta)$ is given in Appendix~\ref{app:exponents}, equation~\eqref{eq:app-Fdef}. It is decreasing in $\alpha$, increasing in $\beta$, and satisfies
\begin{equation}\label{eq:Fendpoint}
\mathcal F_{d,\alpha}(\beta)
\ge
\mathcal F_{d,3}(\beta)
=
\frac{d(d+1)\beta+3d^2+d-18}{18}.
\end{equation}
\end{proposition}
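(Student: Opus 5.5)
The plan is to apply the master estimate, Proposition~\ref{prop:master}, to the level-set component $G_\lambda=\{a\ge\lambda\}$ itself. We then close a self-improving inequality for $J_\lambda$ by writing $J_\lambda$ as a spacetime interaction integral, as in~\eqref{eq:B1-rep}. The three inputs are: the representation
\[
J_\lambda=\int_0^T\!\int\rho_\lambda|E_\sigma|\,dx\,dt\le C\int_0^T\!\iint\frac{\rho_\lambda(t,x)\rho(t,y)}{|x-y|^\alpha}\,dx\,dy\,dt,
\]
which follows from measure preservation and Tonelli exactly as in~\eqref{eq:B1-rep}; the Chebyshev bound~\eqref{eq:MsJs}; and the hypothesis~\eqref{eq:tailhyp}.

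\textbf{Step 1: control of the component $\rho_\lambda$.} For $\lambda\ge 1+Q_0$, \eqref{eq:MsJs} gives $(1+Q_0)M_\lambda\le J_\lambda$. Proposition~\ref{prop:master} then yields
\[
\int_0^T\!\int\rho_\lambda^{p_0}\le C\lambda^{-1/d}J_\lambda^{1+1/d},\qquad \sup_t\|\rho_\lambda(t)\|_1=M_\lambda\le\lambda^{-1}J_\lambda .
\]
The same bounds hold for every level $s$. Combined with~\eqref{eq:tailhyp}, they give $\int\!\!\int\rho_s^{p_0}\le C s^{-1/d-\beta(1+1/d)}$ and $M_s\le Cs^{-1-\beta}$ for large $s$.

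\textbf{Step 2: decomposition of the field source.} Split $\rho=\rho_{\mathrm{bulk}}+\sum_k\rho_{[2^k,2^{k+1})}$. Here $\rho_{\mathrm{bulk}}$ is the component with $a<s_0$ for a fixed large $s_0$; it is controlled only by $M$ and by $X_T\le C(1+T)$ from Proposition~\ref{prop:firstbudget}. The dyadic pieces are components of $G_{2^k}$, so Step 1 applies to them. Each bilinear term $\iint\rho_\lambda\rho_{\ast}|x-y|^{-\alpha}$ is treated as in the proof of Proposition~\ref{prop:firstbudget}:
\begin{itemize}
\item bilinear Hardy--Littlewood--Sobolev with exponents $q_1,q_2$, where $1/q_1+1/q_2=2-\alpha/d$;
\item interpolation of each factor between $L^1_x$ (the mass bounds) and $L^{p_0}_x$;
\item H\"older in time against the spacetime $L^{p_0}$ bounds.
\end{itemize}
The exponents $q_1,q_2$ are to be chosen so that the interpolation parameters lie in $[0,1]$ and the total time power of the $L^{p_0}$ norms is at most $p_0$. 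This is where I expect the hypothesis $\alpha>3d/(d+3)$ to enter. For smaller $\alpha$ one can put all weight on the $L^1$ side, and Step 1 alone already suffices. For $\alpha$ in the stated range, the $L^{p_0}$ side must be used nontrivially, and the optimal split is interior. I would take $q_1=q_2=p_\alpha$ first and adjust only if the dyadic sum fails to converge.

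\textbf{Step 3: closing the inequality.} Each term now has the form $C\lambda^{-\mu}J_\lambda^{\gamma}s^{-\nu(\beta)}$ with $\gamma<1$, because $\alpha<3$, exactly as in the Young absorption of Proposition~\ref{prop:firstbudget}. I expect $\nu(\beta)>0$ for the relevant $\beta$, so the dyadic series converges. The bulk term has $\nu=0$ but a better power of $\lambda$. Absorbing $J_\lambda^\gamma$ gives $J_\lambda\le C\lambda^{-\mu/(1-\gamma)}$ up to the series factor. Optimizing over the splitting parameters gives the explicit exponent $\mathcal F_{d,\alpha}(\beta)$ recorded in~\eqref{eq:app-Fdef}.

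\textbf{Main obstacle.} The hard part is Step 2: the self-interaction of high-$a$ particles, i.e.\ the pieces with $s\gtrsim\lambda$ or comparable to $\lambda$, must be summed without losing the gain in $\lambda$. The fallback is to bound the sum over $s\ge\lambda$ crudely by the $G_\lambda$ component itself, giving a purely quadratic term in $\rho_\lambda$. That term is closed using Step 1 alone, and the hypothesis is used only for $s<\lambda$.

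The monotonicity claims and the endpoint formula~\eqref{eq:Fendpoint} are then elementary algebra on the resulting closed-form exponent, evaluated at $\alpha=3$. I would defer these to the appendix computation.
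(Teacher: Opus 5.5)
There is a genuine gap, and it is not where you expect it. The obstruction is not the self-interaction of high-$a$ particles. It is the interaction of $\rho_\lambda$ with the \emph{low and intermediate} levels.

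In your Step~2, every partner piece enters only through its mass and its spacetime $L^{p_0}$ norm. For a piece at level $s$ these are bounded by $Cs^{-1-\beta}$ and $Cs^{-(1+(d+1)\beta)/(d+3)}$. Both decay in $s$, so the dyadic series is dominated by its lowest terms $s\approx s_0$ and is merely $O(1)$. The tail hypothesis therefore produces no gain in $\lambda$. Indeed, interpolate $\rho_\lambda$ and the partner between $L^1$ and $L^{p_0}$ with parameters $\vartheta_1,\vartheta_2\in(0,1]$. The HLS relation forces $\vartheta_1+\vartheta_2=1+\theta$ with $\theta=\alpha(d+3)/(3d)-1$, hence $\vartheta_1\ge\theta$. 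The best you can conclude is
\[
J_\lambda^{D}\lesssim_T\lambda^{-A},
\qquad
A=1-\tfrac{d+2}{d+3}\theta,
\qquad
D=\tfrac{2\theta}{d+3},
\]
that is, $J_\lambda\lesssim_T\lambda^{-A/D}$, with an exponent that does not depend on $\beta$ at all.

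Your bulk term, handled through $M$ and $X_T$, gives exactly this same exponent, so it is not ``better''. This exponent is strictly smaller than $\mathcal F_{d,\alpha}(\beta)$. For $d=4$ and $\alpha\uparrow3$ it tends to $5/3<17/9=\mathcal F_{4,3}(0)$. Since it does not improve under iteration, the bootstrap in Corollary~\ref{cor:strongtail} could never pass $\beta=3$.

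The missing idea is to \emph{remove} the low-accumulation source with an $L^\infty$ field bound and absorb it into the left side. The cutoff must grow with $\lambda$: a fixed $s_0$ is not enough, because the levels between $s_0$ and $\lambda^{1/\alpha}$ would still contribute $O(1)$ via HLS.

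\begin{itemize}
\item Set $L=c_T\lambda^{1/\alpha}$. Particles with $a<L$ satisfy $|V|\le Q_0+a<\tfrac32L$. Their density is therefore at most $CF_0L^d$, with mass at most $M$.
\item Lemma~\ref{lem:nearfar} then gives $\|E_{<L}(t)\|_\infty\le CL^\alpha$.
\item Their contribution to $J_\lambda$ is at most $C_TL^\alpha M_\lambda\le C_T(L^\alpha/\lambda)J_\lambda$, which is at most $\tfrac12J_\lambda$ once $c_T$ is small.
\item What remains is the single term $\int_0^T\!\iint\rho_\lambda(t,x)\rho_L(t,y)|x-y|^{-\alpha}$, where $\rho_L$ is the whole component $\{a\ge L\}$. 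Since $G_\lambda\subset G_L$, the self-interaction you worried about is included automatically, and no dyadic sum is needed.
\item Place $\rho_\lambda$ in $L^{q_\theta}_tL^{p_\theta}_x$, with $1/p_\theta=1-3\theta/(d+3)$ and $1/q_\theta=d\theta/(d+3)$, and place $\rho_L$ in $L^{p_0}_{t,x}$. The master estimate at both levels then gives
\[
J_\lambda^{D}\lesssim_T\lambda^{-A}L^{-1/(d+3)}J_L^{(d+1)/(d+3)}.
\]
\end{itemize}

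Now the hypothesis applied at the level $L\sim\lambda^{1/\alpha}$ supplies the extra factor $\lambda^{-(1+(d+1)\beta)/(\alpha(d+3))}$. This is exactly the $\beta$-dependent part of $\mathcal F_{d,\alpha}(\beta)$.
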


\begin{proof}
For sufficiently large $s$, Proposition~\ref{prop:master},~\eqref{eq:MsJs}, and $s\ge2(1+Q_0)$ give
\begin{equation}\label{eq:levelset-X}
X_s:=\int_0^T\!\int\rho_s^{p_0}
\le
C F_0^{2/d}s^{-1/d}J_s^{1+1/d}.
\end{equation}
Indeed, $(1+Q_0)M_s\le J_s/2$ for such $s$.

For $0<\vartheta\le1$, let $p_\vartheta,q_\vartheta$ be defined by
\[
\frac1{p_\vartheta}=1-\frac{3\vartheta}{d+3},
\qquad
\frac1{q_\vartheta}=\frac{d\vartheta}{d+3}.
\]
Interpolation between mass and~\eqref{eq:levelset-X} yields
\begin{equation}\label{eq:mixed}
\|\rho_s\|_{L_t^{q_\vartheta}L_x^{p_\vartheta}}
\le
C s^{-A_d(\vartheta)}J_s^{B_d(\vartheta)},
\end{equation}
where
\[
A_d(\vartheta):=1-\frac{d+2}{d+3}\vartheta,
\qquad
B_d(\vartheta):=1-\frac{2}{d+3}\vartheta.
\]
The interpolation algebra is recorded in Appendix~\ref{app:tail-algebra}.

Choose
\[
L=c_T\lambda^{1/\alpha},
\]
with $c_T>0$ sufficiently small. For large $\lambda$, $2Q_0\le L<\lambda$. Split the initial labels into $\{a<L\}$ and $\{a\ge L\}$ and denote the corresponding fields by $E_{<L}$ and $E_L$. If $a(z)<L$, then
\[
|V(t;z)|\le Q_0+a(z)<\frac32L,
\]
so
\[
\|\rho_{<L}(t)\|_\infty\le CF_0L^d,
\qquad
\|\rho_{<L}(t)\|_1\le M.
\]
Lemma~\ref{lem:nearfar} therefore gives
\begin{equation}\label{eq:lowfield}
\|E_{<L}(t)\|_\infty\le CL^\alpha.
\end{equation}

Using the exact Lagrangian splitting and~\eqref{eq:MsJs},
\begin{align*}
J_\lambda
&=\int_0^T\!\int\rho_\lambda|E_\sigma|\\
&\le C_TL^\alpha M_\lambda
 +\int_0^T\!\int\rho_\lambda|E_L|\\
&\le C_T\frac{L^\alpha}{\lambda}J_\lambda
 +\int_0^T\!\int\rho_\lambda|E_L|.
\end{align*}
Choosing $c_T$ so that the first term is at most $J_\lambda/2$ gives
\begin{equation}\label{eq:highhigh}
J_\lambda
\le
C\int_0^T\iint
\frac{\rho_\lambda(t,x)\rho_L(t,y)}{|x-y|^\alpha}
\,dx\,dy\,dt.
\end{equation}

Now take the unique interpolation parameter
\begin{equation}\label{eq:theta-tail}
\theta:=\frac{\alpha(d+3)}{3d}-1.
\end{equation}
The assumption $3d/(d+3)<\alpha<3$ is exactly what gives $0<\theta<1$, and this choice satisfies
\begin{equation}\label{eq:tail-exponents}
\frac1{p_\theta}+\frac1{p_0}+\frac\alpha d=2,
\qquad
\frac1{q_\theta}+\frac1{p_0}=\frac\alpha3<1.
\end{equation}
Thus the first identity matches the bilinear HLS relation, while the second leaves the positive time-H\"older exponent $1-\alpha/3$. Consequently,
\[
J_\lambda
\le
C_T
\|\rho_\lambda\|_{L_t^{q_\theta}L_x^{p_\theta}}
\|\rho_L\|_{L_{t,x}^{p_0}}.
\]
Writing
\[
A:=A_d(\theta),
\qquad
D:=1-B_d(\theta)=\frac{2\theta}{d+3},
\]
we obtain from~\eqref{eq:mixed} and~\eqref{eq:levelset-X}
\begin{equation}\label{eq:key-tail-bootstrap}
J_\lambda^D
\le
C_T\lambda^{-A}L^{-1/(d+3)}J_L^{(d+1)/(d+3)}.
\end{equation}
If $J_\lambda=0$, the conclusion is trivial. Otherwise insert~\eqref{eq:tailhyp} and $L=c_T\lambda^{1/\alpha}$ into~\eqref{eq:key-tail-bootstrap}. The resulting power of $\lambda$ is precisely $\mathcal F_{d,\alpha}(\beta)$; its derivation, monotonicity, and endpoint form~\eqref{eq:Fendpoint} are verified in Appendix~\ref{app:tail-algebra}.
\end{proof}

\begin{corollary}[Strong tail in the remaining dimensions]\label{cor:strongtail}
Assume
\[
4\le d\le6,
\qquad
\frac{3d}{d+3}<\alpha<3.
\]
Then for every finite $T<T_*$ there exist $\beta>3$, $C_T$, and $\lambda_0$ such that
\begin{equation}\label{eq:strongtail}
J(\lambda)\le C_T\lambda^{-\beta},
\qquad
\lambda\ge\lambda_0.
\end{equation}
\end{corollary}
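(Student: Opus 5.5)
The plan is to iterate Proposition~\ref{prop:tailbootstrap}, starting from the initial tail bound~\eqref{eq:initial-tail}, i.e.\ $\beta_0=0$. Set $\beta_{k+1}:=\mathcal F_{d,\alpha}(\beta_k)$. Each application of the proposition turns a tail bound $J_s\le C s^{-\beta_k}$, valid for all large $s$, into $J_\lambda\le C'\lambda^{-\beta_{k+1}}$, valid for all large $\lambda$. The constants and thresholds change at each step, but only finitely many steps will be used, so this causes no trouble. Since $\mathcal F_{d,\alpha}$ is increasing in $\beta$ and decreasing in $\alpha$, the endpoint bound~\eqref{eq:Fendpoint} gives
\[
\beta_{k+1}\ge \mathcal F_{d,3}(\beta_k)=\frac{d(d+1)\beta_k+3d^2+d-18}{18}.
\]
So it suffices to track the affine comparison iteration $\gamma_{k+1}=\Lambda\gamma_k+\kappa$ with $\gamma_0=0$, where $\Lambda=d(d+1)/18$ and $\kappa=(3d^2+d-18)/18$. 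By induction and monotonicity of $\mathcal F$, $\beta_k\ge\gamma_k$. If some earlier $\beta_k$ already lies in a range where the proposition's output would be weaker, we simply stop there.

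\textbf{Dimension check.} For $d=4$ we have $\Lambda=20/18$ and $\kappa=34/18$, so $\gamma_1=17/9\approx1.89$ and $\gamma_2=\tfrac{10}{9}\cdot\tfrac{17}{9}+\tfrac{17}{9}\approx 3.99>3$. For $d=5$, $\gamma_1=(75+5-18)/18=62/18\approx3.44>3$. For $d=6$, $\gamma_1=(108+6-18)/18=96/18\approx5.33>3$. Hence at most two iterations are needed, and we may take $\beta:=\beta_2>3$ (or $\beta_1$ when $d=5,6$). In every case $\Lambda\ge 20/18>1$ and $\kappa>0$, so the iteration moves upward, which confirms that finitely many steps always suffice.

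\textbf{Bookkeeping.} The proposition needs its hypothesis~\eqref{eq:tailhyp} only for large $s$. It is applied at the same fixed $T<T_*$ throughout, and $L=c_T\lambda^{1/\alpha}\to\infty$, so the hypothesis is evaluated where it is known. The resulting $C_T$ and $\lambda_0$ depend only on $T$ and the data.

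The main obstacle is not conceptual. It is checking that the monotonicity and endpoint claims~\eqref{eq:Fendpoint} from the appendix are available for every $\beta\ge0$ used, in particular at $\beta=0$. It is also checking that, in $d=4$, two steps really exceed $3$: the margin there is small, since $\gamma_2=323/81\approx3.99$. If a tighter margin were needed, I would use the exact $\mathcal F_{d,\alpha}$ for $\alpha<3$ rather than the endpoint bound, since the exact exponent is strictly larger.
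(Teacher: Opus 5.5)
Your proposal is correct and is essentially the paper's own proof. You start from $\beta_0=0$ supplied by the first budget~\eqref{eq:initial-tail}, apply Proposition~\ref{prop:tailbootstrap} once for $d=5,6$ and twice for $d=4$, and use the monotonicity of $\mathcal F_{d,\alpha}$ in $\alpha$ and $\beta$ together with the endpoint values $\mathcal F_{5,3}(0)=31/9$, $\mathcal F_{6,3}(0)=16/3$ and $\mathcal F_{4,3}(17/9)=323/81>3$, exactly as the paper does; the only blemish is cosmetic, namely that your comparison constants $\Lambda$ and $\kappa$ clash with the paper's $\Lambda=1+A_T$ and $\kappa=(d-\alpha)/(d-1)$ and should be renamed.
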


\begin{proof}
The first budget~\eqref{eq:initial-tail} gives the initial exponent $\beta_0=0$. By~\eqref{eq:Fendpoint},
\[
\mathcal F_{5,3}(0)=\frac{31}{9}>3,
\qquad
\mathcal F_{6,3}(0)=\frac{16}{3}>3.
\]
Thus one bootstrap step suffices for $d=5,6$. For $d=4$,
\[
\mathcal F_{4,3}(0)=\frac{17}{9},
\qquad
\mathcal F_{4,3}\!\left(\frac{17}{9}\right)=\frac{323}{81}>3.
\]
Since $\mathcal F_{d,\alpha}$ is decreasing in $\alpha$ and increasing in $\beta$, two steps suffice in $d=4$. This proves~\eqref{eq:strongtail}.
\end{proof}

\section{From tail decay to characteristic control}

Fix a reference characteristic
\[
(X_*,V_*)(t)=\Phi_t(z_*).
\]
Write
\[
a_*:=a(z_*),
\qquad
A_T:=\sup_{z\in\supp f_0}a(z),
\qquad
\Lambda:=1+A_T.
\]
Since $|V(t;z)|\le |v_0|+a(z)$,
\begin{equation}\label{eq:Q-vs-A}
Q_T:=1+\sup_{0\le t\le T}\sup\{|v|:(x,v)\in\supp f(t)\}
\le C_0\Lambda.
\end{equation}
\begin{lemma}\label{lem:crossing}
Fix a characteristic $(X_z,V_z)(t)=\Phi_t(z)$ and a relative-velocity scale $\tau>0$. There is a partition of $[0,T]$ into at most
\begin{equation}\label{eq:partition-count}
N(z)\le1+C\frac{a_*+a(z)}{\tau}
\end{equation}
intervals such that, on every interval $I$ which meets
\[
\tau\le |V_z(t)-V_*(t)|<2\tau,
\]
one has, for every $r>0$,
\begin{equation}\label{eq:crossing}
\int_{I\cap\{|X_z-X_*|>r\}}
|X_z(t)-X_*(t)|^{-\alpha}\,dt
\le C_\alpha\tau^{-1}r^{1-\alpha}.
\end{equation}
\end{lemma}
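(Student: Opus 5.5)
The plan is to study the relative motion $y(t):=X_z(t)-X_*(t)$, $w(t):=V_z(t)-V_*(t)=\dot y(t)$. Along characteristics, $\dot V=E_\sigma(t,X)$, so $|\dot w|\le |E_\sigma(t,X_z)|+|E_\sigma(t,X_*)|$. The total variation of $w$ on $[0,T]$ is therefore at most $a(z)+a_*$. The partition is built from this variation budget. Starting at $t_0=0$, define $t_{k+1}$ as the first time after $t_k$ at which
\[
\int_{t_k}^{t}\bigl(|E_\sigma(s,X_z(s))|+|E_\sigma(s,X_*(s))|\bigr)\,ds=\tau/4,
\]
or $T$ if no such time exists. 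Each completed interval consumes $\tau/4$ of the budget $a(z)+a_*$. Hence there are at most $1+4(a_*+a(z))/\tau$ intervals, which is \eqref{eq:partition-count}. Continuity of the integral in $t$ is clear because $E_\sigma$ is bounded on the fixed slab by Lemma~\ref{lem:compact-field}.

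On any interval $I=[t_k,t_{k+1}]$ we have $|w(t)-w(s)|\le\tau/4$ for all $t,s\in I$. If $I$ meets $\{\tau\le|w|<2\tau\}$ at some time $s_0$, then $w(t)=w_0+e(t)$ with $w_0:=w(s_0)$, $|w_0|\ge\tau$ and $|e(t)|\le\tau/4$. Consequently $\hat w_0\cdot\dot y(t)\ge|w_0|-\tau/4\ge\tfrac34\tau$ on all of $I$, where $\hat w_0=w_0/|w_0|$. The scalar function $h(t):=\hat w_0\cdot y(t)$ is therefore strictly increasing on $I$ with $\dot h\ge\tfrac34\tau$, and $|y(t)|\ge|h(t)|$.

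Next I bound the integral. On $I\cap\{|y|>r\}$ we have $|y|^{-\alpha}\le\min(r,|h|)^{-\alpha}$, interpreted as $\max(r,|h|)$ in the denominator: $|y|\ge\max(r,|h|)$, so $|y|^{-\alpha}\le\max(r,|h(t)|)^{-\alpha}$. Changing variables $u=h(t)$, with $dt\le \tfrac{4}{3\tau}\,du$ by monotonicity, gives
\[
\int_{I\cap\{|y|>r\}}|y|^{-\alpha}\,dt\le\frac{4}{3\tau}\int_{\R}\max(r,|u|)^{-\alpha}\,du
=\frac{4}{3\tau}\Bigl(2r^{1-\alpha}+\frac{2r^{1-\alpha}}{\alpha-1}\Bigr).
\]
This is finite precisely because $\alpha>1$, which holds in the hard range, and it yields \eqref{eq:crossing} with $C_\alpha=\tfrac{8}{3}\,\tfrac{\alpha}{\alpha-1}$.

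The only delicate point I expect is the bookkeeping in the first step. The field along characteristics is only measurable-bounded, so the construction must use the integrated quantity $\int|E|$ rather than a pointwise derivative. This is harmless, since $w$ is absolutely continuous on the slab and the stopping times are well defined by continuity of the cumulative integral. Beyond that, the argument is the standard Pfaffelmoser–Schaeffer "straight-line crossing" estimate, with the velocity variation controlled by the accumulated field $a$ instead of by a sup bound.
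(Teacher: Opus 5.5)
Your proof is correct and follows the paper's argument. You partition $[0,T]$ by increments of $\int(|E_\sigma(s,X_z)|+|E_\sigma(s,X_*)|)\,ds$ (you use $\tau/4$, the paper uses $\tau/8$), observe that $h=\hat w_0\cdot(X_z-X_*)$ is monotone with speed at least $\frac34\tau$, and then integrate. The only difference is cosmetic: you change variables $u=h(t)$ against the majorant $\max(r,|u|)^{-\alpha}$, which gives the explicit constant $C_\alpha=\frac83\frac{\alpha}{\alpha-1}$, whereas the paper sums the time spent in the dyadic shells $2^jr<|X_z-X_*|\le2^{j+1}r$.
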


\begin{proof}
Partition time according to increments of
\[
\delta_z(t):=\int_0^t
\bigl(|E_\sigma(s,X_*(s))|+|E_\sigma(s,X_z(s))|\bigr)\,ds
\]
of size $\tau/8$. This gives~\eqref{eq:partition-count}.

Suppose an interval contains $t_*$ with
\[
\tau\le|V_z(t_*)-V_*(t_*)|<2\tau.
\]
Throughout that interval the change in relative velocity is at most $\tau/8$. If
\[
e:=\frac{V_z(t_*)-V_*(t_*)}{|V_z(t_*)-V_*(t_*)|},
\]
then, after weakening the constant,
\[
(V_z(t)-V_*(t))\cdot e\ge\frac34\tau.
\]
Thus $(X_z(t)-X_*(t))\cdot e$ is monotone with speed at least $c\tau$. On the shell

$$
2^j r<|X_z-X_*|\le 2^{j+1}r,
$$

we have

$$
|(X_z-X_*)\cdot e|\lesssim 2^jr.
$$

Since

$$
\frac{d}{dt}\big((X_z-X_*)\cdot e\big)
=(V_z-V_*)\cdot e
$$

has size at least \(c\tau\), the time spent in the shell is at most

$$
C\frac{2^jr}{\tau}.
$$
 Summing gives
\[
\sum_{j\ge0}\frac{2^jr}{\tau}(2^jr)^{-\alpha}
\le C_\alpha\tau^{-1}r^{1-\alpha},
\]
because $\alpha>1$.
\end{proof}

\begin{proposition}\label{prop:tailclosure}
Assume that for some $\beta\ge0$ there exist $s_0$ and $C_T$ such that
\begin{equation}\label{eq:tail-closure-hyp}
J(s)\le C_Ts^{-\beta},
\qquad s\ge s_0.
\end{equation}
Define
\begin{equation}\label{eq:kappaW}
\kappa:=\frac{d-\alpha}{d-1},
\qquad
W_{d,\beta}(\alpha):=d-(d+\beta+2)\kappa.
\end{equation}
If
\begin{equation}\label{eq:tail-close-cond}
W_{d,\beta}(\alpha)<1,
\end{equation}
then
\[
A_T+Q_T\le C_T.
\]
Equivalently, the closing condition is
\begin{equation}\label{eq:tail-close-equivalent}
(d-\alpha)(d+\beta+2)>(d-1)^2.
\end{equation}
\end{proposition}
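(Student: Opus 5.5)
The plan is to bound $a_*$ for an arbitrary reference label $z_*\in\supp f_0$ by a quantity of the form $C_T(1+\Lambda^{W})$ with $W=W_{d,\beta}(\alpha)<1$. Taking the supremum over $z_*$ then gives $\Lambda\le 1+C_T(1+\Lambda^W)$, which forces $\Lambda\le C_T$. Finally \eqref{eq:Q-vs-A} gives $Q_T\le C_0\Lambda\le C_T$. The bound on $a_*$ will be a Pfaffelmoser--Schaeffer type decomposition of the field along $X_*$, written in Lagrangian labels. By measure preservation,
\[
a_*\le C\int f_0(z)\int_0^T|X_z(t)-X_*(t)|^{-\alpha}\,dt\,dz .
\]
I split the labels $z$ into several pieces, controlled by three parameters:
\begin{itemize}
\item a velocity-level threshold $s\ge s_0$ on $a(z)$;
\item a relative-velocity scale $\tau$;
\item a spatial radius $r$.
\end{itemize}
All three will later be chosen as powers of $\Lambda$.

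\textbf{Step 1 (slow and near pieces).} For labels with $|V_z(t)-V_*(t)|<\tau$, the transported density lives in a velocity ball of radius $\tau$. Its density is therefore bounded by $CF_0\tau^d$ with mass at most $M$, so Lemma~\ref{lem:nearfar} bounds this piece of the field by $C\tau^\alpha$. Its time integral is at most $CT\tau^\alpha$.

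Next take labels with $|V_z-V_*|\ge\tau$ but $|X_z-X_*|\le r$. I split further by the threshold $s$.
\begin{itemize}
\item For $a(z)<s$, velocities are at most $Q_0+s$, so the density is at most $CF_0s^d$. This part contributes $CTs^d r^{d-\alpha}$.
\item For $a(z)\ge s$, the mass is $M_s\le s^{-1}J(s)\le C_Ts^{-1-\beta}$ by \eqref{eq:MsJs} and \eqref{eq:tail-closure-hyp}, and the density is at most $CF_0\Lambda^d$. Lemma~\ref{lem:nearfar} applied to this component gives a contribution of order $T\,(s^{-1-\beta})^{1-\alpha/d}\Lambda^\alpha$.
\end{itemize}
This is the only place where the tail exponent $\beta$ enters.

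\textbf{Step 2 (fast, far pieces).} Decompose $|V_z-V_*|\ge\tau$ dyadically into bands $2^k\tau\le|V_z-V_*|<2^{k+1}\tau$, with $2^k\tau\lesssim\Lambda$. For each band apply Lemma~\ref{lem:crossing} with scale $2^k\tau$. The far-field contribution of label $z$ is then at most
\[
C\Bigl(1+\frac{a_*+a(z)}{2^k\tau}\Bigr)(2^k\tau)^{-1}r^{1-\alpha}.
\]
Integrating against $f_0$ and using the first budget $\int f_0a\le C_T$ from Proposition~\ref{prop:firstbudget}, the sum over $k$ is dominated by $k=0$. This gives
\[
C_T\bigl(\tau^{-1}+\Lambda\tau^{-2}\bigr)r^{1-\alpha}\lesssim C_T\Lambda\tau^{-2}r^{1-\alpha}.
\]

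\textbf{Step 3 (optimization; the main obstacle).} The total estimate has the form
\[
a_*\lesssim \tau^\alpha+s^dr^{d-\alpha}+s^{-(1+\beta)(1-\alpha/d)}\Lambda^\alpha+\Lambda\tau^{-2}r^{1-\alpha}.
\]
I will balance the near and far terms, $r^{d-1}\sim\Lambda\tau^{-2}s^{-d}$. This produces the factor $(d-\alpha)/(d-1)=\kappa$. I then choose $\tau$ and $s$ as powers of $\Lambda$ so that all four terms equal $\Lambda^{W}$. The difficulty is bookkeeping: the crude version above may not reproduce exactly $W=d-(d+\beta+2)\kappa$.

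If it does not, the refinement I would try first is to run the splitting in $s$ dyadically. For labels with $a(z)\sim 2^m$ I would use both the velocity bound $Q_0+2^{m+1}$ and the mass bound $C_T2^{-m(1+\beta)}$ at once, through Lemma~\ref{lem:nearfar} inside the ball of radius $r$. In the crossing estimate I would likewise replace the budget $\int f_0a$ by the level-set masses. I expect this dyadic version to sharpen the exponent to the stated $W_{d,\beta}(\alpha)$. The equivalence of $W_{d,\beta}(\alpha)<1$ with \eqref{eq:tail-close-equivalent} is immediate algebra: $d-1<(d+\beta+2)(d-\alpha)/(d-1)$.
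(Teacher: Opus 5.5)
There is a genuine gap: Step~3 is never carried out, and the crude four-term bound you set up cannot reach $W_{d,\beta}(\alpha)$. It fails in much of the range the proposition claims. Your closing logic is fine: bound $a_*$ by $C_T(1+\Lambda^{\gamma})$ with $\gamma<1$, take the supremum over $z_*$, then use \eqref{eq:Q-vs-A}.

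The loss comes from where the tail enters. You use it only in the Eulerian near--far bound $s^{-(1+\beta)(1-\alpha/d)}\Lambda^\alpha$, which ignores crossing. Meanwhile the crossing term in Step~2 is fed only the first budget, so it carries no decay. Take $d=7$, $\beta=0$, $\alpha=2$, where $(d-\alpha)(d+\beta+2)=45>36=(d-1)^2$:
\begin{enumerate}
\item Your third term is $o(\Lambda)$ only if $s\gtrsim\Lambda^{7/5}$, that is, only if $\{a\ge s\}$ is empty. So the near density is effectively $\Lambda^d$.
\item Balancing $\Lambda^d r^{d-\alpha}$ against $\Lambda\tau^{-2}r^{1-\alpha}$ with $\tau=\Lambda^t$ gives $\Lambda^{2-5t/3}$, which needs $t>3/5$.
\item The slow term $\tau^\alpha$ needs $t<1/2$, a contradiction.
\end{enumerate}

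Your dyadic refinement points in the right direction: the tail must enter the crossing estimate through level-set masses. But as described it still lacks the ingredient that produces $W_{d,\beta}$. The paper bins jointly in current speed $\lambda\le|w|<2\lambda$ and relative speed $\tau\le|w-V_*|<2\tau$, with $P=\Lambda^{1/(2\alpha)}\le\lambda,\tau\lesssim\Lambda$. The region below $P$ is handled by Lemma~\ref{lem:nearfar} at cost $\Lambda^{1/2}$. In each bin:
\begin{enumerate}
\item[(i)] the near part uses the volume of the velocity section, $\min\{\lambda,\tau\}^d$, not merely a speed bound;
\item[(ii)] the far part uses the mass $\int_{F_\lambda}f_0\lesssim\lambda^{-\beta-1}$ of labels that ever reach speed $\lambda$, together with Lemma~\ref{lem:crossing}, giving $\Lambda\lambda^{-\beta-1}\tau^{-2}r^{1-\alpha}$;
\item[(iii)] the radius $r=r_{\lambda,\tau}$ is optimized separately in each bin.
\end{enumerate}
With $m=\min\{\lambda,\tau\}$ this yields $\Lambda^\kappa m^{d(1-\kappa)}\lambda^{-(\beta+1)\kappa}\tau^{-2\kappa}$ per bin, hence $O(\Lambda^\kappa+\Lambda^{W_{d,\beta}(\alpha)})$, and the $O(\log^2\Lambda)$ bins close.

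Point (i) is essential. If the near part is not restricted to the relative-speed band, the far sum is dominated by the smallest relative speed, which your Step~1 forces to be $\lesssim\Lambda^{1/\alpha}$. You then get closure only under $(d-1)-(d+\beta)\kappa<2\kappa/\alpha$, instead of $(d-1)-(d+\beta)\kappa<2\kappa$, which is exactly $W_{d,\beta}(\alpha)<1$. For example, at $d=7$, $\beta=0$, $\alpha=2.9$ the second condition holds and the first fails. The mass bound inside the ball, which you also propose, does not rescue this case, since there $\alpha-(1-\alpha/d)>1$.
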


\begin{proof}
Choose
\[
P:=\Lambda^{1/(2\alpha)}.
\]
If $A_T$ is bounded by a sufficiently large fixed data-dependent constant, there is nothing to prove. Hence we may assume $A_T$ is large enough that
\[
P\ge2Q_0+2,
\qquad
P/2\ge s_0.
\]
At time $t$, define the good velocity region
\[
\mathcal G(t):=\{|w|<P\}\cup\{|w-V_*(t)|<P\}.
\]
Its velocity section is contained in the union of two balls of radius $P$, so the corresponding partial density satisfies
\[
\|\rho_{\mathcal G}(t)\|_\infty\le CF_0P^d,
\qquad
\|\rho_{\mathcal G}(t)\|_1\le M.
\]
Lemma~\ref{lem:nearfar} gives
\begin{equation}\label{eq:good}
I_{\mathcal G}
\le C_TP^\alpha
=C_T\Lambda^{1/2}.
\end{equation}
On the complement of $\mathcal G(t)$, use dyadic scales $\lambda,\tau$ with
\[
P\le\lambda,\tau\le C\Lambda
\]
and define
\[
S_{\lambda,\tau}(t)
:=
\{(y,w):\lambda\le|w|<2\lambda,
\ \tau\le|w-V_*(t)|<2\tau\}.
\]
Let $m:=\min\{\lambda,\tau\}$ and
\[
I_{\lambda,\tau}
:=
\int_0^T\iint_{S_{\lambda,\tau}(t)}
\frac{f(t,y,w)}{|X_*(t)-y|^\alpha}
\,dy\,dw\,dt.
\]
Split at a spatial radius $r>0$.
For dyadic velocity scales $\lambda,\tau$ define
\[
S_{\lambda,\tau}(t)
:=
\left\{
(y,w):
\lambda\le |w|<2\lambda,\quad
\tau\le |w-V_*(t)|<2\tau
\right\}.
\]

Set
\[
I_{\lambda,\tau}
:=
\int_0^T
\iint_{S_{\lambda,\tau}(t)}
\frac{f(t,y,w)}
{|X_*(t)-y|^\alpha}
\,dy\,dw\,dt.
\]

Fix a spatial cutoff radius $r>0$ and split
\[
I_{\lambda,\tau}
=
I_{\lambda,\tau}^{\mathrm{near}}(r)
+
I_{\lambda,\tau}^{\mathrm{far}}(r),
\]
where
\[
I_{\lambda,\tau}^{\mathrm{near}}(r)
:=
\int_0^T
\iint_{\substack{
S_{\lambda,\tau}(t)\\
|X_*(t)-y|\le r
}}
\frac{f(t,y,w)}
{|X_*(t)-y|^\alpha}
\,dy\,dw\,dt,
\]
and
\[
I_{\lambda,\tau}^{\mathrm{far}}(r)
:=
\int_0^T
\iint_{\substack{
S_{\lambda,\tau}(t)\\
|X_*(t)-y|>r
}}
\frac{f(t,y,w)}
{|X_*(t)-y|^\alpha}
\,dy\,dw\,dt.
\]
For the near part, the velocity section has volume $O(m^d)$. Indeed, if $\lambda\le\tau$ it is contained in the ball $\{|w|<2\lambda\}$, while if $\tau<\lambda$ it is contained in $\{|w-V_*(t)|<2\tau\}$. Hence in both cases its partial density is at most $CF_0m^d$. Therefore
\begin{equation}\label{eq:near-bin}
I_{\lambda,\tau}^{\rm near}(r)
\le C_Tm^dr^{d-\alpha}.
\end{equation}

For the far part, \[
F_\lambda
:=
\left\{
z\in \operatorname{supp} f_0:
\exists\, t\in[0,T]\ \text{such that}\ 
\lambda\le |V(t;z)|<2\lambda
\right\}.
\] Since $\lambda\ge2Q_0$,
\[
a(z)\ge\lambda-Q_0\ge\frac\lambda2
\qquad(z\in F_\lambda).
\]
The tail hypothesis gives
\begin{equation}\label{eq:F-tail}
\int_{F_\lambda}f_0a
\le J(\lambda/2)
\le C_T\lambda^{-\beta},
\qquad
\int_{F_\lambda}f_0
\le C_T\lambda^{-\beta-1}.
\end{equation}
Using the measure-preserving characteristic change of variables, Tonelli, Lemma~\ref{lem:crossing}, and~\eqref{eq:partition-count}, the far contribution is bounded by
\begin{align*}
I_{\lambda,\tau}^{\rm far}(r)
&\le
C\tau^{-1}r^{1-\alpha}
\int_{F_\lambda}f_0(z)
\left(1+\frac{A_T+a(z)}{\tau}\right)\,dz\\
&\le
C_T\tau^{-1}r^{1-\alpha}
\left(
\lambda^{-\beta-1}
+A_T\tau^{-1}\lambda^{-\beta-1}
+\tau^{-1}\lambda^{-\beta}
\right).
\end{align*}
Since $A_T\le\Lambda$ and $\lambda,\tau\le C\Lambda$,
\begin{equation}\label{eq:far-bin}
I_{\lambda,\tau}^{\rm far}(r)
\le
C_T\Lambda\lambda^{-\beta-1}\tau^{-2}r^{1-\alpha}.
\end{equation}

Balance~\eqref{eq:near-bin} and~\eqref{eq:far-bin}:
\[
m^dr^{d-\alpha}
=
\Lambda\lambda^{-\beta-1}\tau^{-2}r^{1-\alpha}.
\]
Since the powers of $r$ differ by $d-1$, choose
\[
r_{\lambda,\tau}
:=
\left(
\Lambda\lambda^{-\beta-1}\tau^{-2}m^{-d}
\right)^{1/(d-1)}.
\]
Then
\begin{equation}\label{eq:balanced-bin}
I_{\lambda,\tau}
\le
C_T\Lambda^\kappa
m^{d(1-\kappa)}
\lambda^{-(\beta+1)\kappa}
\tau^{-2\kappa}.
\end{equation}
The remaining step is purely exponent bookkeeping. The two cases $\tau\le\lambda$ and $\lambda<\tau$ lead to the same worst power; Appendix~\ref{app:characteristic-algebra} gives the complete calculation. Its conclusion is
\begin{equation}\label{eq:perbin}
I_{\lambda,\tau}
\le C_T\bigl(\Lambda^\kappa+\Lambda^{W_{d,\beta}(\alpha)}\bigr).
\end{equation}

There are $O((\log(2+\Lambda))^2)$ dyadic pairs $(\lambda,\tau)$. Combining the bad-region sum with~\eqref{eq:good} and taking the supremum over the reference characteristic gives
\begin{equation}\label{eq:A-sublinear}
A_T
\le
C_T\Bigl(
\Lambda^{1/2}+\Lambda^\kappa+\Lambda^{W_{d,\beta}(\alpha)}
\Bigr)
(\log(2+\Lambda))^2.
\end{equation}
Since $\alpha>1$,
\[
0<\kappa<1.
\]
Under~\eqref{eq:tail-close-cond},
\[
\gamma_0:=\max\left\{\frac12,\kappa,W_{d,\beta}(\alpha)\right\}<1.
\]
Choose $\gamma\in(\gamma_0,1)$. Because $\log^2(2+\Lambda)$ is dominated by every positive power of $\Lambda$,
\[
A_T\le C_T(1+A_T)^\gamma.
\]
This scalar sublinear inequality bounds $A_T$, and~\eqref{eq:Q-vs-A} bounds $Q_T$.

Finally,
\[
W_{d,\beta}(\alpha)<1
\iff
(d-\alpha)(d+\beta+2)>(d-1)^2,
\]
which proves~\eqref{eq:tail-close-equivalent}.
\end{proof}

\begin{remark}[Relation with accumulated-field moments]\label{rem:Bk}
If for some $k\ge1$ one knows
\[
B_k(T):=\int f_0a^k\le C_T,
\]
then on $\{a\ge s\}$,
\[
J(s)=\int_{\{a\ge s\}}f_0a
\le s^{1-k}B_k(T).
\]
Thus Proposition~\ref{prop:tailclosure} applies with $\beta=k-1$ and recovers the criterion
\[
(d-\alpha)(d+k+1)>(d-1)^2.
\]
The global proof below does not need to introduce $B_k$ for $k>1$.
\end{remark}

\begin{lemma}[Uniformity on bounded time intervals]\label{lem:uniform-constants}
Fix $\overline T<\infty$. All constants and lower thresholds in Proposition~\ref{prop:firstbudget}, Proposition~\ref{prop:tailbootstrap}, Corollary~\ref{cor:strongtail}, and Proposition~\ref{prop:tailclosure} may be chosen uniformly for
\[
0<T\le\overline T,
\]
depending only on $\overline T$, the fixed initial data, $d$, and $\alpha$, and never on $A_T$, $Q_T$, or the maximal lifespan.
\end{lemma}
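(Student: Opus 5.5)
The plan is to audit each cited result in order and record how its constants depend on $T$. The aim is to check that every constant is either a monotone nondecreasing function of $T$, or a threshold that can be taken as its value at $\overline T$. The only inputs allowed are $d,\alpha,F_0,M,Q_0$, the spatial radius $R_0$ of $\supp f_0$, and $T$ itself. Nowhere may the a priori support radius $Q_T$, the quantity $A_T$, or $T_*$ enter. Once each constant is shown to be nondecreasing in $T$ and built only from these data, replacing $T$ by $\overline T$ gives the uniform choice.

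\textbf{Step 1: first budget.} In Proposition~\ref{prop:master} the constant $C_dF_0^{2/d}M_G^{1/d}$ and the factor $(1+Q_0)$ are explicitly independent of $T$ and of the velocity support. In Proposition~\ref{prop:firstbudget}, the HLS constant and the interpolation constant depend only on $d,\alpha,M$. The time factor $T^{1-\alpha/3}$ is increasing in $T$, so the Young absorption gives $X_T\le C(1+T)\le C(1+\overline T)$ and $B_1(T)\le C_{\overline T}$. The finiteness of $B_1(T)$ is used only qualitatively, before the estimate, so it affects no constant.

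\textbf{Step 2: tail amplification and Corollary~\ref{cor:strongtail}.} In Proposition~\ref{prop:tailbootstrap} three things must be checked.
\begin{enumerate}
\item The threshold $s\ge2(1+Q_0)$ is data-only.
\item In \eqref{eq:lowfield}, the bound $\|E_{<L}\|_\infty\le CL^\alpha$ holds with $C=C(d,\alpha,F_0,M)$, and after time integration $C_T=CT\le C\overline T$. Hence $c_T$ can be chosen as $c_{\overline T}$: smaller $c$ only makes the absorption easier, and it keeps $2Q_0\le L<\lambda$ for $\lambda\ge\lambda_0(\overline T)$.
\item The H\"older step contributes $T^{1-\alpha/3}$, and the bound \eqref{eq:levelset-X} is $T$-free.
\end{enumerate}
Feeding in hypothesis constants $C_{\beta,T}$ and thresholds $s_0(T)$ that are monotone in $T$ therefore produces output constants and thresholds with the same property. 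Iterating at most twice, as in Corollary~\ref{cor:strongtail}, keeps this property.

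\textbf{Step 3: Proposition~\ref{prop:tailclosure}.} The constants in Lemma~\ref{lem:crossing} are purely $\alpha$-dependent. In \eqref{eq:near-bin} the factor $C_T$ is $CF_0T$. The far bound uses the tail constants from Step 2. The good-region bound \eqref{eq:good} is $CT P^\alpha$. The final sublinear inequality $A_T\le C_T(1+A_T)^\gamma$ has $\gamma$ depending only on $d,\alpha,\beta$, and its solution bound is monotone in $C_T$. The case distinction ``if $A_T$ is bounded by a large fixed constant'' uses a threshold determined by $Q_0$ and $s_0(\overline T)$. So that case is uniform too.

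\textbf{Main obstacle.} The delicate point is the logical order. Each threshold ($\lambda_0$, $s_0$, and $c_T$) must be fixed from $\overline T$ before $A_T$ is examined. In particular, the dyadic range $P\le\lambda,\tau\le C\Lambda$ must not feed $\Lambda$ back into any threshold. I would make this explicit by defining all constants at $T=\overline T$ first and noting that every estimate holds verbatim, with the same constants, for smaller $T$, because each $T$-dependence is through a factor $T^{\gamma}$ with $\gamma\ge0$ or through integrals over $[0,T]\subset[0,\overline T]$ of nonnegative quantities.
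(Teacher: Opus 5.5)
Your proposal is correct and follows the paper's route. Like the paper, you audit each step and bound every explicit $T$-dependence by its value at $\overline T$. You fix $c_{\overline T}$ once, so the thresholds for $L=c_{\overline T}\lambda^{1/\alpha}$ depend only on $c_{\overline T}$ and $Q_0$, and only finitely many bootstrap steps are composed. The closure sees $T$ only through the good-region and near-bin factors and through the tail constants. Your monotonicity-in-$T$ framing and your explicit check that no threshold depends on $\Lambda$ simply make the paper's brief argument more detailed.
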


\begin{proof}
In Proposition~\ref{prop:firstbudget}, all occurrences of $T$ are bounded by $\overline T$, so the resulting constants are uniform. In the tail bootstrap, choose once and for all $c_{\overline T}>0$ so small that the low-accumulation contribution is absorbed for every $T\le\overline T$. The corresponding lower thresholds for $L=c_{\overline T}\lambda^{1/\alpha}$ depend only on $c_{\overline T}$ and $Q_0$. The strong-tail corollary uses only one bootstrap step in dimensions $5,6$ and two in dimension $4$, so only finitely many such uniform constants are composed. Finally, Proposition~\ref{prop:tailclosure} uses these tail constants, the fixed initial support and mass, and the interval length only through the harmless good-region factor $T$ and the crossing partition; hence its constants are also uniform for $T\le\overline T$. No step introduces $A_T$ or $Q_T$ into a constant that is subsequently used to bound $A_T$ or $Q_T$.
\end{proof}

\section{Proof of global well-posedness}

Define the first-tail threshold
\begin{equation}\label{eq:alpha1}
\alpha_1(d)
:=d-\frac{(d-1)^2}{d+2}
=\frac{4d-1}{d+2}.
\end{equation}
The two threshold comparisons used below are
\[
\alpha_1(d)\ge3\iff d\ge7,
\qquad
\alpha_1(d)>\frac{3d}{d+3}\quad(4\le d\le6).
\]
Their elementary algebra, together with the endpoint check for the strong-tail closure, is collected in Appendix~\ref{app:threshold-algebra}.

\begin{proof}[Proof of Theorem~\ref{thm:gwp}]
Let $[0,T_*)$ be the maximal lifespan.

If $0<\alpha\le1$, Proposition~\ref{prop:easy} gives $T_*=\infty$.

Now assume $1<\alpha<3$ and fix $T<T_*$. Proposition~\ref{prop:firstbudget} gives
\[
B_1(T)\le C_T,
\]
so~\eqref{eq:initial-tail} gives the tail bound $J(s)\le C_T$, i.e. Proposition~\ref{prop:tailclosure} applies with $\beta=0$ whenever
\[
(d-\alpha)(d+2)>(d-1)^2,
\]
or equivalently
\[
\alpha<\alpha_1(d).
\]
Hence $A_T+Q_T\le C_T$ in that range. By the first threshold comparison above, this already covers every $1<\alpha<3$ when $d\ge7$.

It remains to consider
\[
4\le d\le6,
\qquad
\alpha_1(d)\le\alpha<3.
\]
By the second threshold comparison above, this range lies strictly above $3d/(d+3)$, so Corollary~\ref{cor:strongtail} gives
\[
J(s)\le C_Ts^{-\beta}
\qquad\text{for some }\beta>3.
\]
Since $\beta>3$, the closing condition follows from the endpoint arithmetic in Appendix~\ref{app:threshold-algebra}. Proposition~\ref{prop:tailclosure} therefore gives
\[
A_T+Q_T\le C_T.
\]

Thus in every case $0<\alpha<3$ the velocity support is bounded on each finite interval by a constant independent of the a priori support size. If $T_*<\infty$, apply Lemma~\ref{lem:uniform-constants} with $\overline T=T_*$. The estimates above are then uniform for every $T<T_*$, so letting $T\uparrow T_*$ gives
\[
\sup_{0\le t<T_*}Q(t)<\infty.
\]
This contradicts Corollary~\ref{cor:continuation}. Hence $T_*=\infty$.
\end{proof}

\section{Smooth conservation laws and the virial obstruction}\label{sec:virial}

This section is logically independent of the bounded-data global-existence proof.  It records the smooth identities that explain the threshold $\alpha=3$. Assume $\alpha>1$, put
\[
q:=\alpha-1,
\qquad
W_q(x):=\frac{c_{d,\alpha}}q|x|^{-q},
\]
so that $K_\alpha=\nabla W_q$. When $q<d-1$, this gradient is locally integrable and the convolution is understood in the ordinary sense. When $d-1\le q<d$, any force appearing in this section is understood in the principal-value/distributional sense, and all regularity assumptions are taken strong enough to justify the displayed identities. Set
\[
M:=\iint f(t,x,v)\,dx\,dv,
\qquad
j(t,x):=\int vf(t,x,v)\,dv.
\]
Transport and measure preservation imply
\[
M=\iint f_0,
\qquad
\|f(t)\|_{L^p}=\|f_0\|_{L^p},
\qquad 1\le p\le\infty.
\]

\begin{lemma}[Continuity equation]\label{lem:continuity}
The density and current satisfy
\[
\partial_t\rho+\nabla_x\cdot j=0.
\]
\end{lemma}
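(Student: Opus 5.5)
We prove Lemma~\ref{lem:continuity} by testing the transport equation against functions that do not depend on velocity. We do this in the weak (distributional) sense, so that $f$ is never differentiated. This mirrors the argument already given for the component $g_G$ in Proposition~\ref{prop:master}.

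\textbf{Step 1: Lagrangian representation.} Fix a test function $\psi\in C_c^\infty((0,T)\times\R^d_x)$. By measure preservation of the characteristic flow $\Phi_t$,
\[
\int_0^T\!\!\int\rho\,\partial_t\psi\,dx\,dt
=\int_0^T\!\!\int f_0(z)\,\partial_t\psi(t,X(t;z))\,dz\,dt .
\]

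\textbf{Step 2: Differentiate along characteristics.} Along each characteristic,
\[
\frac{d}{dt}\psi(t,X(t;z))=\partial_t\psi+V(t;z)\cdot\nabla_x\psi .
\]
Integrate this identity in $t$. Since $\psi$ is compactly supported in time, the boundary terms vanish, so
\[
\int_0^T\partial_t\psi(t,X(t;z))\,dt=-\int_0^T V(t;z)\cdot\nabla_x\psi(t,X(t;z))\,dt .
\]
Multiply by $f_0(z)$ and integrate in $z$. Tonelli/Fubini is justified because, on the solution slab, the phase-space support is compact and $|V|\le Q_T$, so every integrand is bounded and compactly supported.

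\textbf{Step 3: Return to Eulerian variables.} Undoing the change of variables turns the right-hand side into
\[
-\int_0^T\!\!\iint f\,v\cdot\nabla_x\psi\,dx\,dv\,dt=-\int_0^T\!\!\int j\cdot\nabla_x\psi\,dx\,dt .
\]
Combining the three steps gives
\[
\int_0^T\!\!\int\bigl(\rho\,\partial_t\psi+j\cdot\nabla_x\psi\bigr)\,dx\,dt=0,
\]
which is exactly $\partial_t\rho+\nabla_x\cdot j=0$ in the sense of distributions.

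\textbf{Step 4: Smooth solutions and why the force drops out.} For the smooth solutions considered in this section, one can argue directly instead. Integrate the Vlasov equation in $v$. The force term $E_\sigma\cdot\nabla_v f=\nabla_v\cdot(E_\sigma f)$ integrates to zero, because $E_\sigma$ does not depend on $v$ and $f$ is compactly supported in $v$. In the weak argument above, the same fact appears differently: $\psi$ is independent of $v$, so its $v$-gradient vanishes and $E_\sigma$ never enters. This is true even when $E_\sigma$ is only defined in the principal-value sense.

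\textbf{Main obstacle.} The only nontrivial point is justifying the interchange of integrals when the support is not compact. For compactly supported solutions it is immediate. For more general regular solutions, we would first test against $\psi(t,x)\chi(v/R)$, where $\chi$ is a velocity cutoff. The term $E_\sigma\cdot\nabla_v\chi(v/R)$ produced by the cutoff is $O(R^{-1})$ on the support of $\nabla_v\chi(v/R)$. It tends to zero as $R\to\infty$ provided $E_\sigma f\in L^1_{\mathrm{loc}}$, which follows from the assumed regularity.
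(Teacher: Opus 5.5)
Your proposal is correct, but its main argument takes a different route from the paper. The paper's proof is exactly your Step~4. Section~\ref{sec:virial} works with solutions assumed regular enough to justify every displayed identity, so it simply integrates \eqref{eq:intro-vlasov} in $v$ and drops the $v$-divergence $\nabla_v\cdot(E_\sigma f)$. Your Steps~1--3 instead obtain the weak form
\[
\int_0^T\!\!\int\bigl(\rho\,\partial_t\psi+j\cdot\nabla_x\psi\bigr)\,dx\,dt=0
\]
by transporting a $v$-independent test function along the measure-preserving flow. This is the same device the paper uses for $g_G$ in Proposition~\ref{prop:master}.

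What your route buys is validity for the bounded characteristic solutions of Theorem~\ref{thm:gwp} without ever differentiating $f$. The price is that it presupposes a well-defined, measure-preserving characteristic flow. So your remark that it works ``even when $E_\sigma$ is only defined in the principal-value sense'' holds only under the same standing regularity assumptions that already make the paper's one-line computation legitimate there.

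In your cutoff argument, the integrability must be global in velocity and local in $(t,x)$, because the support of $\nabla_v\chi(v/R)$ escapes to $|v|\to\infty$. Two conditions are needed:
\begin{itemize}
\item $|E_\sigma|\rho\in L^1_{\mathrm{loc}}$, so that the $O(R^{-1})$ term vanishes;
\item $\int|v|f\,dv\in L^1_{\mathrm{loc}}$, so that you can pass to the limit in the transport term by dominated convergence.
\end{itemize}
Both hold under the regularity and finite-energy assumptions of that section.
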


\begin{proof}
Integrate~\eqref{eq:intro-vlasov} in $v$. The $v$-divergence term vanishes.
\end{proof}

Define
\[
K(t):=\frac12\iint |v|^2f(t,x,v)\,dx\,dv,
\]
\[
U(t):=\frac12\iint W_q(x-y)\rho(t,x)\rho(t,y)\,dx\,dy,
\]
and
\[
H_\sigma:=K-\sigma U.
\]

\begin{proposition}[Energy conservation]\label{prop:energy}
On every interval on which the integrations below are justified,
\[
H_\sigma(t)=H_\sigma(0).
\]
\end{proposition}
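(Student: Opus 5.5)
The plan is to differentiate $K$ and $U$ separately in time and show that $\dot K=\sigma\dot U$. We work on an interval where $f$ is regular enough, with enough decay in $v$ and in $x$, that every integration by parts below is legitimate and all boundary terms vanish. This is exactly the standing hypothesis of the proposition.

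\emph{Kinetic part.} Multiply \eqref{eq:intro-vlasov} by $\tfrac12|v|^2$ and integrate over $\R^{2d}$. The transport term $v\cdot\nabla_x f$ integrates to zero in $x$. For the force term, integrate by parts in $v$ and use that $E_\sigma$ does not depend on $v$:
\[
\frac{d}{dt}K=-\iint \tfrac12|v|^2E_\sigma\cdot\nabla_vf\,dx\,dv=\iint v\cdot E_\sigma f\,dx\,dv=\int E_\sigma\cdot j\,dx .
\]

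\emph{Potential part.} $W_q$ is even, so the symmetric bilinear form gives $\dot U=\iint W_q(x-y)\rho(y)\,\partial_t\rho(x)\,dx\,dy=\int (W_q*\rho)\,\partial_t\rho\,dx$. By Lemma~\ref{lem:continuity}, $\partial_t\rho=-\nabla\cdot j$. One more integration by parts, together with $\nabla(W_q*\rho)=K_\alpha*\rho=\sigma E_\sigma$ (recall $K_\alpha=\nabla W_q$), yields
\[
\dot U=\int j\cdot(K_\alpha*\rho)\,dx=\sigma\int j\cdot E_\sigma\,dx .
\]
Since $\sigma^2=1$, we get $\dot K=\sigma\dot U$, so $\dot H_\sigma=0$. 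Integrating in time gives $H_\sigma(t)=H_\sigma(0)$.

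\emph{Main obstacle.} The only delicate point is justifying $\nabla(W_q*\rho)=K_\alpha*\rho$ and the symmetric differentiation of $U$ when $q\ge d-1$. In that range $K_\alpha$ is not locally integrable and the force is a principal value. Here I would use the convention fixed at the start of the section: all forces are read in the distributional/principal-value sense, and $\rho$ is regular enough (for instance $C^1_c$) that $W_q*\rho\in C^1$. The identity then holds distributionally and pairs against $j$. For $q<d-1$ the convolution identity is classical. The decay of the boundary terms in $v$ comes from the compact support (or assumed moments) of $f$. In $x$, $U$ is finite because $q<d$ makes $W_q$ locally integrable, and $\rho\in L^1\cap L^\infty$.
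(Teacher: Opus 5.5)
Your proposal is correct and is essentially the paper's proof: you compute $K'$ by testing the equation against $\tfrac12|v|^2$ and integrating by parts in $v$, and $U'$ via the evenness of $W_q$, the continuity equation, and one integration by parts in $x$ using $\nabla(W_q*\rho)=K_\alpha*\rho$, which gives $K'=\sigma U'$. The only cosmetic difference is that the paper names $E_0:=\nabla W_q*\rho$ so that $E_\sigma=\sigma E_0$, whereas you work with $E_\sigma$ directly and use $\sigma^2=1$.
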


\begin{proof}
First assume the solution is smooth. Let $E_0:=\nabla W_q*\rho$, so $E_\sigma=\sigma E_0$. Multiplying the Vlasov equation by $|v|^2/2$ and integrating by parts gives
\[
K'(t)=\sigma\int j\cdot E_0\,dx.
\]
Writing $\Phi=W_q*\rho$ and using Lemma~\ref{lem:continuity}, symmetry of $W_q$, and integration by parts in $x$,
\[
U'(t)
=\int\Phi\,\partial_t\rho\,dx
=\int\nabla\Phi\cdot j\,dx
=\int E_0\cdot j\,dx.
\]
Thus $K'-\sigma U'=0$.

\end{proof}

Define the spatial second moment
\[
I(t):=\iint |x|^2f(t,x,v)\,dx\,dv.
\]

\begin{proposition}[Virial identity]\label{prop:virial}
For every sufficiently regular solution,
\begin{equation}\label{eq:virial}
I''(t)=4H_\sigma+2\sigma(2-q)U(t).
\end{equation}
\end{proposition}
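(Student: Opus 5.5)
We differentiate $I$ twice using the continuity equation and the momentum equation, and then identify the interaction term through the homogeneity of $W_q$. Throughout we assume the solution is smooth enough that every moment below is finite and every integration by parts is justified, as the statement allows. The case $d-1\le q<d$ needs more care and is treated in the last paragraph.

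For the first derivative, Lemma~\ref{lem:continuity} and one integration by parts in $x$ give
\[
I'(t)=\int |x|^2\partial_t\rho\,dx=-\int |x|^2\nabla_x\cdot j\,dx=2\int x\cdot j\,dx.
\]
Next I use the momentum equation $\partial_t j+\Div_x\Pi=\rho E_\sigma$. It follows by multiplying \eqref{eq:intro-vlasov} by $v$ and integrating in $v$, exactly as in the proof of Proposition~\ref{prop:master}. It gives
\[
\frac{d}{dt}\int x\cdot j\,dx=\int \operatorname{tr}\Pi\,dx+\int \rho\, x\cdot E_\sigma\,dx
=2K(t)+\sigma\int\rho\,x\cdot E_0\,dx,
\]
with $E_0=\nabla W_q*\rho$ as in the proof of Proposition~\ref{prop:energy}. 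Indeed $\int x_i\,\partial_{x_k}\Pi_{ik}\,dx=-\int\Pi_{ii}\,dx$, and $\int\operatorname{tr}\Pi\,dx=\iint|v|^2f=2K$.

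The interaction term is handled by symmetrization. Write
\[
\int\rho\,x\cdot E_0\,dx=\iint\rho(x)\rho(y)\,x\cdot\nabla W_q(x-y)\,dx\,dy.
\]
Since $\nabla W_q$ is odd, exchanging $x$ and $y$ and averaging turns this into
\[
\tfrac12\iint\rho(x)\rho(y)\,(x-y)\cdot\nabla W_q(x-y)\,dx\,dy.
\]
From $\nabla W_q(z)=-c_{d,\alpha}z|z|^{-q-2}=K_\alpha(z)$ we get the Euler relation $z\cdot\nabla W_q(z)=-c_{d,\alpha}|z|^{-q}=-qW_q(z)$. The term therefore equals $-qU(t)$. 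Hence
\[
I''=4K-2\sigma qU=4(H_\sigma+\sigma U)-2\sigma qU=4H_\sigma+2\sigma(2-q)U,
\]
using $K=H_\sigma+\sigma U$. This is \eqref{eq:virial}.

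The main obstacle is justifying the symmetrization step when $q\ge d-1$. There $\nabla W_q$ is not locally integrable, and the force exists only as a principal value. My first approach is to regularize the kernel, replacing $W_q$ by $W_q^\varepsilon(z)=\frac{c_{d,\alpha}}{q}(|z|^2+\varepsilon^2)^{-q/2}$. For $W_q^\varepsilon$ the symmetrized computation is exact. The symmetrized integrand $z\cdot\nabla W_q^\varepsilon(z)$ is dominated by $C|z|^{-q}$, which is locally integrable because $q<d$. Dominated convergence for the symmetrized double integral then passes to $\varepsilon\to0$, and the limit is consistent with the principal-value definition of the force, since the odd part of the kernel drops out under symmetrization. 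The remaining integrations by parts in $x$ and $v$ need only finite second moments and decay, which are part of the standing regularity assumption.
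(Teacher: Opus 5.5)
Your proof is correct and follows the paper's argument. Both compute $I''=4K+2\int\rho\,x\cdot E_\sigma\,dx$ (you via the continuity and momentum equations, the paper by direct integration by parts), symmetrize using the oddness of $\nabla W_q$, apply the Euler relation $z\cdot\nabla W_q(z)=-qW_q(z)$, and substitute $K=H_\sigma+\sigma U$; your kernel regularization for $d-1\le q<d$ goes slightly beyond the paper, which simply folds the justification of the symmetrization in that range into its standing regularity assumption.
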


\begin{proof}
Integration by parts gives
\[
I'(t)=2\iint x\cdot v f\,dx\,dv,
\]
and hence
\[
I''(t)=4K(t)+2\int x\cdot E_\sigma(t,x)\rho(t,x)\,dx.
\]
By symmetry,
\[
\int x\cdot E_\sigma(x)\rho(x)\,dx
=\frac\sigma2\iint
(x-y)\cdot\nabla W_q(x-y)\rho(x)\rho(y)\,dx\,dy.
\]
Since $W_q$ is homogeneous of degree $-q$,
\[
z\cdot\nabla W_q(z)=-qW_q(z),
\]
so
\[
\int x\cdot E_\sigma\rho\,dx=-\sigma qU(t).
\]
Therefore
\[
I''=4K-2\sigma qU.
\]
Since $K=H_\sigma+\sigma U$, this is~\eqref{eq:virial}.
\end{proof}

\begin{proof}[Proof of Theorem~\ref{thm:virial-obstruction}]
For attraction, $\sigma=+1$, so
\[
I''(t)=4H_++2(2-q)U(t).
\]
If $q\ge2$, then $2-q\le0$, while $U(t)\ge0$. If $H_+<0$,
\[
I''(t)\le4H_+<0.
\]
Thus
\[
I(t)\le I(0)+I'(0)t+2H_+t^2,
\]
whose right-hand side becomes negative for large $t$, contradicting $I(t)\ge0$.
\end{proof}

\clearpage
\appendix
\section{Exponent calculations}\label{app:exponents}

This appendix contains the elementary exponent manipulations stripped away from the main argument. No estimate is proved here.

\subsection{The first-budget interpolation}\label{app:first-budget-algebra}

Recall
\[
p_0=\frac{d+3}{d},
\qquad
p_\alpha=\frac{2d}{2d-\alpha}.
\]
To interpolate $L^{p_\alpha}$ between $L^1$ and $L^{p_0}$, choose $\theta_\alpha$ from
\[
\frac1{p_\alpha}=1-\theta_\alpha+\frac{\theta_\alpha}{p_0}.
\]
Since
\[
1-\frac1{p_\alpha}=\frac{\alpha}{2d},
\qquad
1-\frac1{p_0}=\frac3{d+3},
\]
we obtain
\begin{equation}\label{eq:app-first-theta}
\theta_\alpha=\frac{\alpha(d+3)}{6d}.
\end{equation}
Hence
\begin{equation}\label{eq:app-first-key}
\frac{2\theta_\alpha}{p_0}
=2\frac{\alpha(d+3)}{6d}\frac{d}{d+3}
=\frac\alpha3.
\end{equation}
Therefore
\[
\|\rho(t)\|_{p_\alpha}^2
\le
M^{2(1-\theta_\alpha)}\|\rho(t)\|_{p_0}^{2\theta_\alpha},
\]
and H\"older in time gives
\[
\int_0^T\|\rho(t)\|_{p_\alpha}^2\,dt
\le
C T^{1-\alpha/3}
\left(\int_0^T\|\rho(t)\|_{p_0}^{p_0}\,dt\right)^{\alpha/3}.
\]
This is the exponent calculation behind~\eqref{eq:B-vs-X}. In particular, the absorption is subcritical exactly for $\alpha<3$.

\subsection{Tail-bootstrap exponents}\label{app:tail-algebra}

For $0<\vartheta\le1$, define
\[
\frac1{p_\vartheta}=1-\frac{3\vartheta}{d+3},
\qquad
\frac1{q_\vartheta}=\frac{d\vartheta}{d+3}.
\]
Since $p_0=(d+3)/d$, the second identity is equivalent to
\[
q_\vartheta=\frac{p_0}{\vartheta}.
\]
At each time,
\[
\|\rho_s(t)\|_{p_\vartheta}
\le
M_s^{1-\vartheta}\|\rho_s(t)\|_{p_0}^{\vartheta}.
\]
Taking the $L_t^{q_\vartheta}$ norm and using $q_\vartheta\vartheta=p_0$ gives
\[
\|\rho_s\|_{L_t^{q_\vartheta}L_x^{p_\vartheta}}
\le
M_s^{1-\vartheta}X_s^{\vartheta/p_0}.
\]
Now insert
\[
M_s\le s^{-1}J_s,
\qquad
X_s\le Cs^{-1/d}J_s^{1+1/d}.
\]
The power of $s$ is
\[
-(1-\vartheta)-\frac{\vartheta}{dp_0}
=-1+\frac{d+2}{d+3}\vartheta,
\]
while the power of $J_s$ is
\[
1-\vartheta+\frac{(1+1/d)\vartheta}{p_0}
=1-\frac{2\vartheta}{d+3}.
\]
Thus
\begin{equation}\label{eq:app-AB}
A_d(\vartheta)=1-\frac{d+2}{d+3}\vartheta,
\qquad
B_d(\vartheta)=1-\frac{2\vartheta}{d+3},
\end{equation}
which yields~\eqref{eq:mixed}.

We next choose $\vartheta$ so that the spatial bilinear HLS relation is exact:
\[
\frac1{p_\vartheta}+\frac1{p_0}+\frac\alpha d=2.
\]
Substituting the definitions gives
\[
1-\frac{3\vartheta}{d+3}+\frac d{d+3}+\frac\alpha d=2,
\]
so
\begin{equation}\label{eq:app-tail-theta}
\vartheta=\theta:=\frac{\alpha(d+3)}{3d}-1.
\end{equation}
Then
\[
\frac1{q_\theta}+\frac1{p_0}
=\frac{d\theta}{d+3}+\frac d{d+3}
=\frac\alpha3.
\]
Thus $\theta>0$ is equivalent to $\alpha>3d/(d+3)$, while $\theta<1$ follows from $\alpha<3$ for $d\ge4$.

Set
\[
A:=A_d(\theta),
\qquad
D:=1-B_d(\theta)=\frac{2\theta}{d+3}.
\]
From~\eqref{eq:key-tail-bootstrap}, the hypothesis $J_L\lesssim L^{-\beta}$, and $L=c_T\lambda^{1/\alpha}$, we get
\[
J_\lambda^D
\lesssim_T
\lambda^{-A}
\lambda^{-1/[\alpha(d+3)]}
\lambda^{-\beta(d+1)/[\alpha(d+3)]}.
\]
Therefore
\begin{equation}\label{eq:app-Fdef}
\boxed{
\mathcal F_{d,\alpha}(\beta)
=
\frac{A+\dfrac{1+(d+1)\beta}{\alpha(d+3)}}{D}
}.
\end{equation}
This is the exponent in Proposition~\ref{prop:tailbootstrap}.

For monotonicity in $\alpha$, direct differentiation gives
\begin{equation}\label{eq:app-Falpha}
\partial_\alpha\mathcal F_{d,\alpha}(\beta)
=-\frac{3d\,\mathcal N_{d,\alpha,\beta}}
{2\alpha^2\bigl(\alpha(d+3)-3d\bigr)^2},
\end{equation}
where
\[
\mathcal N_{d,\alpha,\beta}
=
\alpha^2(d+3)^2+2\alpha(d+3)-3d
+\beta\bigl(2\alpha d^2+8\alpha d+6\alpha-3d^2-3d\bigr).
\]
At $\alpha=3d/(d+3)$, the $\beta$-independent part is $9d^2+3d>0$ and the coefficient of $\beta$ is $3d(d+1)>0$; both increase with $\alpha$. Hence $\mathcal N_{d,\alpha,\beta}>0$ throughout the admissible range and
\[
\partial_\alpha\mathcal F_{d,\alpha}(\beta)<0.
\]
Moreover
\begin{equation}\label{eq:app-Fbeta}
\partial_\beta\mathcal F_{d,\alpha}(\beta)
=\frac{d+1}{\alpha(d+3)D}>0.
\end{equation}
At $\alpha=3$, formula~\eqref{eq:app-Fdef} simplifies to
\begin{equation}\label{eq:app-Fendpoint}
\mathcal F_{d,3}(\beta)
=
\frac{d(d+1)\beta+3d^2+d-18}{18}.
\end{equation}
In particular,
\[
\mathcal F_{4,3}(0)=\frac{17}{9},
\qquad
\mathcal F_{4,3}\!\left(\frac{17}{9}\right)=\frac{323}{81}>3,
\]
\[
\mathcal F_{5,3}(0)=\frac{31}{9}>3,
\qquad
\mathcal F_{6,3}(0)=\frac{16}{3}>3.
\]
This proves that one bootstrap step suffices in dimensions $5,6$, while two suffice in dimension $4$.

\subsection{Characteristic-bin powers}\label{app:characteristic-algebra}

After balancing the near and far contributions, the main proof obtains
\begin{equation}\label{eq:app-balanced}
I_{\lambda,\tau}
\le
C_T\Lambda^\kappa
m^{d(1-\kappa)}
\lambda^{-(\beta+1)\kappa}
\tau^{-2\kappa},
\qquad
m=\min\{\lambda,\tau\},
\end{equation}
where
\[
\kappa=\frac{d-\alpha}{d-1}.
\]
We verify the uniform per-bin bound in the two possible orderings of the dyadic scales.

If $\tau\le\lambda$, then $m=\tau$, and~\eqref{eq:app-balanced} becomes
\[
I_{\lambda,\tau}
\le
C_T\Lambda^\kappa
\lambda^{-(\beta+1)\kappa}\tau^a,
\qquad
a:=d-(d+2)\kappa.
\]
If $a\le0$, then $\lambda,\tau\ge1$ give $I_{\lambda,\tau}\lesssim_T\Lambda^\kappa$. If $a>0$, use $\tau\le\lambda$ to obtain
\[
I_{\lambda,\tau}
\le C_T\Lambda^\kappa\lambda^b,
\qquad
b:=d-(d+\beta+3)\kappa.
\]
If $b\le0$, this is again $O(\Lambda^\kappa)$. If $b>0$, then $\lambda\lesssim\Lambda$ and
\[
I_{\lambda,\tau}
\lesssim_T
\Lambda^{\kappa+b}
=
\Lambda^{d-(d+\beta+2)\kappa}.
\]

If $\lambda<\tau$, then $m=\lambda$, and
\[
I_{\lambda,\tau}
\le
C_T\Lambda^\kappa\lambda^c\tau^{-2\kappa},
\qquad
c:=d-(d+\beta+1)\kappa.
\]
If $c\le0$, again $I_{\lambda,\tau}\lesssim_T\Lambda^\kappa$. If $c>0$, use $\lambda<\tau$:
\[
I_{\lambda,\tau}
\le
C_T\Lambda^\kappa\tau^{c-2\kappa}
=C_T\Lambda^\kappa\tau^b,
\]
where the same $b=d-(d+\beta+3)\kappa$ appears. The preceding dichotomy therefore applies unchanged.

Thus, with
\begin{equation}\label{eq:app-W}
W_{d,\beta}(\alpha):=d-(d+\beta+2)\kappa,
\end{equation}
we obtain
\[
I_{\lambda,\tau}
\le C_T\bigl(\Lambda^\kappa+\Lambda^{W_{d,\beta}(\alpha)}\bigr),
\]
which is~\eqref{eq:perbin}. Finally,
\[
W_{d,\beta}(\alpha)<1
\iff
(d-\alpha)(d+\beta+2)>(d-1)^2,
\]
because $\kappa=(d-\alpha)/(d-1)$.

\subsection{Dimension and threshold arithmetic}\label{app:threshold-algebra}

For the initial tail $\beta=0$, the closing condition is
\[
(d-\alpha)(d+2)>(d-1)^2.
\]
Solving for $\alpha$ gives
\[
\alpha<\alpha_1(d),
\qquad
\alpha_1(d)
=d-\frac{(d-1)^2}{d+2}
=\frac{4d-1}{d+2}.
\]
Moreover,
\[
\alpha_1(d)\ge3
\iff
4d-1\ge3d+6
\iff
d\ge7.
\]
For $4\le d\le6$,
\begin{align*}
\alpha_1(d)-\frac{3d}{d+3}
&=\frac{(4d-1)(d+3)-3d(d+2)}{(d+2)(d+3)}\\
&=\frac{d^2+5d-3}{(d+2)(d+3)}>0.
\end{align*}
Hence whenever the initial tail fails to close in dimensions $4,5,6$, the exponent automatically lies in the tail-bootstrap range $\alpha>3d/(d+3)$.

After the bootstrap we have some $\beta>3$. Since the closing condition improves as $\beta$ increases, it suffices to check $\beta=3$:
\[
(d-\alpha)(d+5)>(d-1)^2.
\]
At the excluded endpoint $\alpha=3$,
\[
(d-3)(d+5)-(d-1)^2=4(d-4)\ge0
\qquad(4\le d\le6).
\]
For $d=4$ equality occurs only at $\alpha=3$; for $d=5,6$ there is already positive slack. Since the theorem assumes $\alpha<3$, the closing inequality is strict throughout the remaining range.

\end{document}